\newif\ifpictures
\picturestrue

\newif\ifcomment
\commenttrue

\documentclass[reqno, 11pt, a4paper, oneside]{amsart}

\def\MV{{\rm MV}}

\usepackage{latex_base}
\usepackage{macros}
\usepackage{breqn}
\usepackage{todonotes}

\usepackage[top=3cm, textwidth=16cm, vmarginratio=1:1]{geometry}

\author{Vadym Kurylenko}

\author{Benjamin Nill}

\dedicatory{Dedicated to the memory of Johanna Steinmeyer}
\keywords{}

\title[Preserving Hodge vectors of lattice polytopes]{Preserving Hodge vectors of lattice polytopes}

\address{Faculty of Mathematics, Otto-von-Guericke-Universit\"at Magdeburg, Universit\"atsplatz 2, 39106 Magdeburg, Germany}

\email{vadym.kurylenko@ovgu.de, benjamin.nill@ovgu.de}

\begin{document}

\maketitle

 \begin{abstract}
 In Ehrhart theory, the local $h^*$-polynomial is a fundamental invariant of a lattice polytope. Its coefficient vector, the \emp{local $h^*$-vector}, is the coefficient vector of the top degree part of the Hodge-Deligne polynomial of the primitive cohomology with compact support of the associated generic hypersurface in the algebraic torus. In the literature on hypergeometric motives, the \emp{Hodge vector} of a lattice polytope is its local $h^*$-vector with leading and trailing zeroes removed. Of recent special interest are \emp{thin} polytopes whose Hodge vectors vanish.

Given lattice polytopes $P_1, \ldots, P_k$ contained in a $k$-dimensional subspace $U \subseteq \R^d$ and a $d$-dimensional lattice polytope $P \subset \R^d$, we compute the Hodge vector of the Cayley polytope $P_1 * \cdots * P_k * P$, and show that it equals the mixed volume of $P_1, \ldots, P_k$ times the Hodge vector of the projection of $P$ along $U$. This allows finding infinitely many high-dimensional lattice polytopes with the same Hodge vector that are not free joins. The proof relies on a closed formula for the Hodge-Deligne polynomial of generic complete intersections in the torus in terms of the bivariate/mixed $h^*$-polynomial. A special case of our construction is what we call \emp{Lawrence twists}: extending the Gale transform by centrally-symmetric pairs of vectors. As applications, we can produce many new thin polytopes answering a question by Borger, Kretschmer and the second author, and we provide an alternative explanation of the thinness of $B_k$-polytopes answering a question of Selyanin. 

 \end{abstract}

\section{Introduction and main result}

Let $P$ be a $d$-dimensional \emp{lattice polytope}, i.e., a $d$-dimensional polytope in $\R^d$ whose vertices are elements of the lattice $\Z^d$. Consider the associated affine hypersurface
 \[ Z_P ~ : ~ \left\{ x \in \left(\mathbb{C}^* \right)^d ~ : ~ \sum_{m \in P \cap \Z^d}  c_m x^m =0  \right\}, \] 
 where $c_m$ are complex numbers that are non-zero if $m$ is a vertex of $P$.  We consider situations only when $c_m$ are sufficiently generic, namely we require $Z_P$ to be nondegenerate with respect to $P$, also known as $P$-regular, see \cite[Definition 3.3]{batyrev_variations_1993}.
 
As will be recalled in \Cref{sec:hodge}, the Hodge-Deligne polynomial of the primitive cohomology of $Z_P$ can be written in the following form
\be E_{\rm prim}(Z_P; u,v) = \sum_{p+q \le d-1} h^*_{p,q} u^p v^q \label{hodge-diamond}\ee
 for nonnegative integers $h^*_{p,q}$ that satisfy Hodge duality $h^*_{p,q}=h^*_{q,p}$. These numbers were termed by Katz and Stapledon \cite[Remark~7.7]{katz_local_2016} as coefficients of the \emp{$h^*$-diamond} of $P$ and have purely combinatorial expressions \cite{danilov_newton_1987, batyrev_mirror_1996, borisov_string_2003, katz_local_2016} in terms of invariants of $P$. 

  Let us recall the main notion in classical Ehrhart theory. The \emp{Ehrhart series}, the generating series of the famous \emp{Ehrhart polynomial} of a lattice polytope, is the following rational function:
\[1+\sum_{k \geq 1} \mid k  P \cap \Z^d \mid t^k = \frac{h^*(P,t)}{(1-t)^{d+1}}. \] 
Its numerator $ h^*(P,t) = 1+\sum_{i=1}^d h_i^* t^i$ is called the \emp{$h^*$-polynomial} of $P$, and its coefficient vector $(1,h^*_1, \ldots, h^*_d)$ its \emp{$h^*$-vector}. A direct relation to algebraic geometry is given by the fact that for $i=1, \ldots, d$ the $i$th coefficient of the $h^*$-polynomial of $P$ equals the sum over the $i$th diagonal of the $h^*$-diamond (i.e., $h^*_i = \sum_{j=0}^{d-i}h^*_{i-1,j}$). Hence, the sum over all entries of the $h^*$-diamond plus one equals the normalized volume of $P$ (which is defined as $d!$ times the Euclidean volume of $P$).
 
 The palindromic top degree coefficient vector  $(h^*_{d-1,0}, h^*_{d-2,1}, \ldots, h^*_{0,d-1})$ of the\linebreak $h^*$-diamond is called the \emp{$\ell^*$-vector} or \emp{local $h^*$-vector} $\ell^*_P$ of $P$. This invariant was originally proposed by Stanley \cite{stanley_subdivisions_1992} in the context of polyhedral subdivisions and turned up independently in the combinatorial formulas for stringy Hodge numbers by Batyrev and Borisov \cite{batyrev_mirror_1996, batyrev_combinatorial_2008, nill_combinatorial_2012}. The study of local $h^*$-polynomials can also be referred to as \emp{local Ehrhart theory}. We refer to \cite{borger_thin_2023} for a comprehensive overview of the literature. In this paper, we emphasize its appearance in the theory of hypergeometric motives \cite{roberts_hypergeometric_2022}. From this viewpoint the question becomes relevant whether lattice polytopes of different dimensions can have the same $\ell^*$-vector up to a different number of outside zeroes. Let us make the notation used in this research community precise:

 \begin{dfn}
The \emp{Hodge vector} of $P$ is defined as the zero vector if $\ell^*_P=0$, and otherwise as the vector $\ell^*_P$ without beginning and trailing zeroes.
 \end{dfn}
 
For instance, for $\ell^*_P=(0,0,1,0,3,0,1,0,0)$ the Hodge vector of $P$ equals\linebreak $(1,0,3,0,1)$. In this area, the following question arises naturally \cite{roberts_hypergeometric_2022, GGFernando}. Suppose we have a $(d-1)$-dimensional hypersurface $Z_P$ that realizes some hypergeometric motive with Hodge vector of length $l<d$. Does there exist a variety $V$ of dimension $l-1$ realizing the same motive with the same Hodge vector? From this perspective, it is thus important to know what combinatorial constructions for lattice polytopes preserve their Hodge vectors in order to recognize whether such a dimension reduction might already be possible in the toric setting. 

\smallskip

In classical Ehrhart theory, the lattice pyramid construction provides a direct way to get from a $d$-dimensional lattice polytope a $(d+1)$-dimensional lattice polytope with the same $h^*$-polynomial. In fact, Batyrev proved in \cite{Batyrev-given} even a converse result, namely, fixing the $h^*$-polynomial forces lattice polytopes in high dimensions to be lattice pyramids. In particular, there are up to lattice pyramid constructions only \emp{finitely} many isomorphism classes of lattice polytopes with the same $h^*$-vector (see also \cite{Nill-given}).

As it turns out, in local Ehrhart theory the situation is quite different and much more open. Note that because of palindromicity, the dimensions of lattice polytopes with the same nonzero Hodge vector need to have the same parity (even/odd). In fact, the Hodge vector of any lattice pyramid just vanishes, see \cite[Lemma~4.5]{batyrev_combinatorial_2008}. One easy way though to get a $(d+2)$-dimensional lattice polytope with the same Hodge vector is to take the free join with a lattice interval of length $2$ (see \ref{subsec:Cayley}). This motivates the question, whether there are other possibilities for preserving the Hodge vector.

In this paper, we present a general construction that allows finding \emp{infinitely} many higher-dimensional lattice polytopes with the same Hodge vector that are not free joins. Before stating the main result, we provide some required terminology. 
Let us recall that the \emp{Cayley polytope} $P_1 * \cdots * P_k$ of lattice polytopes $P_1, \ldots, P_k$ in $\R^d$ is defined as
\[P_1 * \cdots * P_k := \conv(P_1 \times \{e_1\}, \ldots, P_k \times \{e_k\}) \subset \R^d \times \R^k.\]
We note that $\dim(P_1 * \cdots * P_k) = \dim(P_1 + \cdots + P_k) + k -1$.  For $k=d$ we define the \emp{ mixed volume} of $P_1,\ldots, P_d$ by 
\[ \MV(P_1, \ldots, P_d) = \sum_{\varnothing \neq I \subseteq [d]} (-1)^{d-\abs{I}} \vol_{d} \left( \sum_{i \in I} P_i \right), \] where $\vol_d$ is the standard Euclidean volume in $\mathbb{R}^d$. Finally,
for a rational subspace $U \subseteq \R^d$, let us consider the projection $\pi: \R^d \rightarrow \R^d/U$. Given a  lattice polytope $P \subset \R^d$,  we say that the lattice polytope $\pi(P)$ with respect to the lattice $\pi(\Z^d)$ is the \emp{projection of $P$ along $U$}. Our main result is as follows.   
\begin{thm}\label{thm:main}
Let $P_1,\ldots,P_k, P_{k+1} \subset \R^d$ be lattice polytopes, where the first $k$ polytopes are contained in a $k$-dimensional rational subspace $U$ and $\dim P_{k+1}=d \geq 1$. Let $V$ denote the mixed volume $\MV(P_1, \ldots, P_k)$ of $P_1, \ldots, P_k$. Then the Hodge vector of $P_1 * \cdots * P_{k+1}$ equals $V$ times the Hodge vector of the projection of $P_{k+1}$ along $U$. 
\end{thm}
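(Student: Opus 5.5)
The plan is to work geometrically. The route goes through the Cayley trick and the Hodge--Deligne polynomial of a generic complete intersection in the torus.

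\textbf{Step 1: Cayley trick.} By the Cayley trick, the hypersurface $Z_{P_1*\cdots*P_{k+1}}$ in $(\mathbb{C}^*)^{d+k}$ relates to the complete intersection $Y=\{f_1=\cdots=f_{k+1}=0\}\subset(\mathbb{C}^*)^d$, where the Newton polytope of $f_i$ is $P_i$. Concretely, the complement of the hypersurface in $(\mathbb{C}^*)^d\times(\mathbb{C}^*)^k$ fibres over the torus, and the usual additivity argument for $E$-polynomials expresses $E(Z_{P_1*\cdots*P_{k+1}})$ through the $E$-polynomials of the partial intersections $\{f_i=0, i\in I\}$. The top degree part of the primitive cohomology, which is exactly $\ell^*$, should then be recovered from the top weight part of $E_c(Y)$, shifted by $k$ in each variable. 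To make this precise I would use the closed formula for the Hodge--Deligne polynomial of generic complete intersections in terms of the mixed $h^*$-polynomial, as announced in the abstract. This reduces the statement to identifying the top-degree part of the mixed $h^*$-polynomial of $(P_1,\dots,P_{k+1})$.

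\textbf{Step 2: computing $Y$.} After a lattice change of coordinates, write $(\mathbb{C}^*)^d=T_U\times T'$, where $T_U$ is the $k$-dimensional torus with character lattice $U\cap\Z^d$ and $T'$ is the quotient torus. The polynomials $f_1,\dots,f_k$ involve only characters in $U$, so they depend only on the $T_U$-coordinates, up to a monomial factor that can be normalized away. For generic coefficients they cut out exactly $\MV(P_1,\dots,P_k)=V$ reduced points of $T_U$ by Bernstein--Kushnirenko. Note that normalization issues for the lattice $U\cap\Z^d$ versus the saturation must be handled, and the mixed volume here is taken in the $k$-dimensional lattice. Hence $Y\cong\bigsqcup_{j=1}^{V}\{t_j\}\times Z_j$, where $Z_j\subset T'$ is the hypersurface obtained by substituting $t_j$ into $f_{k+1}$. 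Its Newton polytope is the projection $\pi(P_{k+1})$ along $U$, with respect to the lattice $\pi(\Z^d)$.

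\textbf{Step 3: genericity and conclusion.} The main obstacle is to check that each $Z_j$ is nondegenerate with respect to $\pi(P_{k+1})$ when the coefficients of $f_{k+1}$ are generic given the $t_j$. The coefficient of a lattice point $m'$ of $\pi(P_{k+1})$ is $\sum_{m\mapsto m'} c_m t_j^{m}$, which is a generic linear form in the $c_m$, so genericity should follow. If this holds, then $E_c(Y)=V\cdot E_c(Z_{\pi(P_{k+1})})$. Taking top weight parts and using the shift from Step 1, we get $\ell^*_{P_1*\cdots*P_{k+1}}$ equal to $V\cdot\ell^*_{\pi(P_{k+1})}$ padded with zeros. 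Stripping the zeros gives the Hodge vector identity. The degenerate cases $V=0$, where both sides vanish, and $\dim \pi(P_{k+1})=d-k$ being $0$, need a brief separate check.
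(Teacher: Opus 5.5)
Your Steps 2 and 3 (splitting $(\mathbb{C}^*)^d=T_U\times T'$, writing $Y$ as $V$ disjoint copies of a generic hypersurface with Newton polytope $\pi(P_{k+1})$, and the genericity check) are exactly what the paper does. Step 1, however, contains a genuine gap. You claim that $\ell^*(P_1*\cdots*P_{k+1};u,v)$ is recovered from the degree-$(d-k-1)$ part of $E(Y;u,v)$ by a shift. This is false in general. A symptom is that your argument never uses the hypothesis $\dim P_{k+1}=d$. It only needs $\dim\pi(P_{k+1})=d-k$, which already follows from $\dim(P_1+\cdots+P_{k+1})=d$.

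That hypothesis is essential. Take $k=1$, $d=2$, $P_1=\conv(0,e_1)$ (so $V=1$) and $P_2=\conv(0,2e_2)$.
\begin{itemize}
\item The projection $\pi(P_2)\cong[0,2]$ has Hodge vector $(1)$.
\item Your Steps 2 and 3 go through verbatim: $Y$ is two points.
\item Yet $P_1*P_2\cong[0,1]\circ[0,2]$ is a lattice pyramid with apex $(1,0,0)$, hence thin.
\end{itemize}
So the top-degree part of $h^*(P_1*\cdots*P_{k+1};u,v)$ is not determined by $E(Y)$ alone. The partial intersections you mention genuinely contribute.

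What has to be done, and what the paper does, is to track the degree-$(d+k+1)$ part of the closed formula
\[(uv)^{k+1}E(Y;u,v)=\sum_{I\subseteq[k+1]}(-1)^{d_I-|I|}(uv-1)^{d-d_I}h^*(P^I;u,v).\]
Its right-hand side involves the bivariate $h^*$-polynomials of all the Cayley polytopes $P^I$, not a single invariant of the tuple.

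For $k+1\in I\subsetneq[k+1]$, the hypothesis $\dim P_{k+1}=d$ gives $d_I=d$. These terms then have degree at most $d+|I|\le d+k$ and drop out; this is where the hypothesis enters.

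For $I\subseteq[k]$, the terms can reach degree $d+k+1$, and not only through pure powers of $uv$. The missing idea is to apply the same closed formula to $P_1,\dots,P_k$ inside the $k$-dimensional torus $T_U$, where the complete intersection is your set of $V$ points. This gives
\[\sum_{I\subseteq[k]}(-1)^{d_I-|I|}(uv-1)^{k-d_I}h^*(P^I;u,v)=V(uv)^k,\]
so these terms add up to $V(uv)^k(uv-1)^{d-k}$. This matches exactly the pure term in
\[(uv)^{k+1}\cdot V\cdot E(Z_{\pi(P_{k+1})})=V(uv)^k\bigl[(-1)^{d-k-1}h^*(\pi(P_{k+1});u,v)+(uv-1)^{d-k}\bigr].\]
Only after this cancellation does comparing degree-$(d+k+1)$ parts yield $\ell^*(P_1*\cdots*P_{k+1};u,v)=V(uv)^k\,\ell^*(\pi(P_{k+1});u,v)$.

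You cannot shortcut this by discarding all pure powers of $uv$. The polynomial $\ell^*(\pi(P_{k+1});u,v)$ may itself have a nonzero middle coefficient of that form, as it does for $\pi(P_{k+1})=[0,2]$.
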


The proof can be found in \Cref{sec:proof}. It follows the original setup of the paper \cite{danilov_newton_1987} by Danilov and Khovanskii and the approach used in \cite{di_rocco_discrete_2016}. As a crucial tool, we present in \cref{prop: mixed_h_relation} an explicit formula  for the Hodge-Deligne polynomial of generic affine complete intersections in the algebraic torus.

\smallskip

Let us describe how to apply \cref{thm:main} to construct infinitely many non-isomorphic high-dimensional lattice polytopes with the same Hodge vector. Let $P$ be a lattice polytope of dimension $d$ in $\R^d$. Choose a nonnegative integer $k$ and any full-dimensional lattice polytope $\tilde{P}$ in $\R^{d+k}$ that projects onto $P$ by projecting onto the first $d$ coordinates. Now, choose any set of $k$ lattice polytopes $P_1, \ldots, P_k$ in $\{0\} \times \R^k$ with mixed volume $1$ (for instance\footnote{Mixed volume one tuples of lattice polytopes were completely classified by Esterov and Gusev \cite{mvone}.}, just take $P_1=\cdots=P_k$ as the convex hull of the last $k$ standard basis vectors together with origin). Then by \cref{thm:main} the Cayley polytope $P_1 * \cdots * P_k * \tilde{P}$ of dimension $d+2k$ has the same Hodge vector as $P$. We call such a  construction a \emp{generalized Lawrence twist}, see \cref{Cayley-formula}. 

 We refer to the most special case of the construction of \cref{thm:main} as a \emp{Lawrence twist} and describe it closely in \Cref{sec:lawrence}. The name comes from its Gale-dual description that involves extending the Gale transform by centrally-symmetric pairs of vectors. 

\medskip

The paper is organized as follows: \Cref{sec:hodge} contains the basics on Hodge-Deligne polynomials, bivariate Ehrhart theory and Cayley polytopes, as well as the closed formula for the Hodge-Deligne polynomial for complete intersections in the torus. \Cref{sec:proof} contains the description of generalized Lawrence twists which implies a proof of \cref{Cayley-formula}. \Cref{sec:lawrence} discusses Lawrence twists. Applications, open questions and relations to other papers will be presented in \Cref{sec:applications}. 

\subsection*{Acknowledgments}
This work is funded by the Deutsche Forschungsgemeinschaft (DFG, German Research Foundation) – 539867500 as part of the research priority program Combinatorial Synergies.  The first author thanks Giulia Gugiatti for insightful conversations, Asem Abdelraouf and Fernando Rodriguez Villegas for useful discussions. We also thank Christian Haase for pointing us to \cite{Halit_Dur}. We are grateful to the anonymous referee for useful comments.

\section{Hodge-Deligne polynomials and bivariate Ehrhart theory} \label{sec:hodge}

This section recalls the main concepts needed for our results.

\subsection{Hodge-Deligne polynomials of affine hypersurfaces in the torus}
The cohomology (with compact support) of an algebraic variety $Z$ carries a mixed Hodge structure. This leads to the definition of its \emp{Hodge-Deligne polynomial}, also known as 
\emp{$E$-polynomial}:
 \[ E(Z; u, v) \coloneqq \sum_{p,q} \left(\sum_{m} (-1)^m  h^{p,q} (H^m_c(Z, \Q) \right) u^p v^q \in \Z[u,v].\]
Recall that $E(\C; u,v)=uv$ and $E(\C^*; u,v)=uv-1$. Hodge-Deligne polynomials have nice properties regarding products and stratifications:
\begin{itemize}
    \item For varieties $X$ and $Y$ we have 
    \[ E(X \times Y; u,v) = E(X;u,v) \cdot E(Y; u,v); \] 
    \item if $X = \sqcup_i X_i$ is a disjoint union of a finite set of locally closed subvarieties $X_i$, then
    \[ E(X; u,v) = \sum_i E(X_i; u,v). \] 
\end{itemize}
See \cite{danilov_newton_1987} for proofs and \cite{di_rocco_discrete_2016} for more examples.

\smallskip
Our particular interest lies in affine hypersurfaces $Z_P$ in algebraic tori. Let us follow \cite{danilov_newton_1987} and discuss further basic properties of the cohomology of $Z_P$. 
First of all, we can focus on studying $P$ of full dimension $d$. Otherwise, $P$ would define a hypersurface $Z'_P$ in a torus of lower dimension  $d'$ and we could write 
\begin{equation} E(Z_P; u,v) = (uv-1)^{d-d'} E(Z'_P; u,v). \label{low-dim}\end{equation}

Since $Z_P$ is an affine variety of dimension $d-1$, by Grothendieck vanishing theorem and Poincaré duality we know that  
\[ H_c^i(Z_P, \mathbb{Q}) =0, \quad \text{for } ~ i=0,\ldots, d-2. \] 
Moreover, the higher cohomology groups are also well understood due to the following proposition. 
\begin{prop}[\cite{danilov_newton_1987}, Proposition 3.9]
There exist   homomorphisms  
\[     \phi_i: ~H_c^i  (Z_P, \mathbb{C})    \rightarrow   H_c^{i+2} (  \left(\mathbb{C}^* \right)^d , \mathbb{C})  \]
that are isomorphisms for $i > d-1$ and surjective for $i=d-1$.    
\end{prop}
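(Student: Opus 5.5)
The plan is to define $\phi_i$ as the Gysin map of the closed smooth hypersurface $Z_P\subset T:=(\C^*)^d$ and to reduce the statement to a Lefschetz-type vanishing. Nondegeneracy makes $Z_P$ smooth of complex dimension $d-1$, and $T$ is smooth of dimension $d$. Poincaré duality gives $H_c^i(Z_P,\C)\cong H^{2d-2-i}(Z_P,\C)^\vee$ and $H_c^{i+2}(T,\C)\cong H^{2d-2-i}(T,\C)^\vee$. We set $\phi_i$ to be the transpose of the restriction map
\[ r_j\colon H^j(T,\C)\to H^j(Z_P,\C), \qquad j=2d-2-i. \]
Under this duality, "$\phi_i$ is an isomorphism for $i>d-1$ and surjective for $i=d-1$" becomes "$r_j$ is an isomorphism for $j<d-1$ and injective for $j=d-1$". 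By the long exact sequence of the pair $(T,Z_P)$,
\[ H^j(T,Z_P)\to H^j(T)\xrightarrow{r_j} H^j(Z_P)\to H^{j+1}(T,Z_P), \]
this follows once we know $H^j(T,Z_P;\C)=0$ for all $j\le d-1$.

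To prove this vanishing, I would use the Andreotti–Frankel/Morse-theoretic argument. Write $Z_P=\{g=0\}$ and let $\psi$ be a strictly plurisubharmonic exhaustion of $T$, for example $\psi=\sum_k(\log|x_k|)^2$, which is strictly psh on $T$. Consider
\[ \rho=\psi+\log|g|^2 \quad\text{on } T\setminus Z_P. \]
This function is strictly psh, since $\log|g|^2$ is pluriharmonic there, and it tends to $-\infty$ along $Z_P$. After a small perturbation, a strictly psh Morse function on a complex $d$-manifold has critical points of index $\le d$ only. Hence $-\rho$ has critical points of index $\ge d$ only. Running the Morse flow of $-\rho$, $T$ is obtained from a tubular neighbourhood $N$ of $Z_P$ (which retracts onto $Z_P$) by attaching cells of dimension $\ge d$. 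This gives $H^j(T,Z_P)\cong H^j(T,N)=0$ for $j\le d-1$, as required.

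The main obstacle is the behaviour at infinity of $T$: the Morse-theoretic argument requires $\rho$ (equivalently $-\rho$) to have no critical points escaping to infinity, i.e.\ the level sets must be well behaved near the toric boundary. This is exactly where $P$-regularity enters. I would first pass to the projective toric variety $X_P$ associated to $P$ and use that the closure $\overline{Z_P}$ meets every torus orbit $O_F$ transversally in the nondegenerate hypersurface $Z_F$. In local toric coordinates near each orbit, one can then check that $\rho$ has no critical points in a neighbourhood of the boundary once $\psi$ is chosen to grow fast enough. If this becomes technical, the fallback is the route in Danilov–Khovanskii: use the stratification of $(X_P,\overline{Z_P})$ by orbits together with Artin vanishing for the affine variety $X_P\setminus\overline{Z_P}$ ($\overline{Z_P}$ is an ample divisor). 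One then inducts on faces, using the stratification property of $E$-polynomials and the isomorphism for lower-dimensional tori $O_F$, to obtain the same vanishing of relative cohomology in degrees $\le d-1$.
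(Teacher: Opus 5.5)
\paragraph{The Morse step runs in the wrong direction.} Your reduction is correct. If $\phi_i$ is the Gysin map, i.e.\ the Poincar\'e dual of restriction $H^{2d-2-i}(T)\to H^{2d-2-i}(Z_P)$, the statement is equivalent to $H^j(T,Z_P;\mathbb{C})=0$ for $j\le d-1$.

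The problem is the index bound. Because $\rho=\psi+\log|g|^2$ is strictly plurisubharmonic and tends to $-\infty$ along $Z_P$, your neighbourhood $N$ is a \emph{sublevel} set $\{\rho\le -C\}$ of $\rho$. Enlarging $N$ to $T$ means raising the level of $\rho$. Each critical point $p$ then attaches a cell of dimension $\mathrm{ind}_p(\rho)\le d$, not $\mathrm{ind}_p(-\rho)\ge d$. Cells of dimension $\mathrm{ind}(-\rho)$ arise only when you grow sublevel sets of $-\rho$, and those start near the part of infinity where $\rho\to+\infty$, not near $Z_P$.

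So even with full control at infinity, your argument gives only $H^j(T,N)=0$ for $j>d$. That holds anyway, since $H^{j-1}(Z_P)=0$ and $H^j(T)=0$ for $j>d$. No choice of $\psi$ can fix this, because $\psi+\log|g|^2$ is plurisubharmonic for every plurisubharmonic $\psi$. To attach cells of dimension $\ge d$ to a neighbourhood of $Z_P$, you would need a strictly pluri\emph{super}harmonic function that is $-\infty$ on $Z_P$ and tends to $+\infty$ at infinity away from $Z_P$. The whole difficulty would then sit near $\overline{Z_P}\cap\partial X_P$, where its level sets are non-compact. That is exactly where nondegeneracy has to be used, and your proposal does not address it.

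\paragraph{The fallback is not yet an argument.} It points to the right setting, but the tool it names cannot work. Additivity of $E$-polynomials only controls alternating sums over all cohomological degrees, so it can never give vanishing or an isomorphism in a prescribed degree. The paper itself gives no proof; it only cites Danilov--Khovanskii. A correct version of the compactification route runs as follows.
\begin{itemize}
\item[(i)] On $X=X_P$, transversality of $\overline{Z_P}$ to every orbit gives a local product structure, hence purity $H^{i}(\overline{Z_P})\cong H^{i+2}_{\overline{Z_P}}(X)$. Since $\overline{Z_P}$ is an ample divisor, $X\setminus\overline{Z_P}$ is affine, so $H^m(X\setminus\overline{Z_P})=0$ for $m>d$. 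The local cohomology sequence then shows that the Gysin map $H^i(\overline{Z_P})\to H^{i+2}(X)$ is an isomorphism for $i>d-1$ and onto for $i=d-1$.
\item[(ii)] By induction on $\dim F$, the proposition holds for each $Z_F\subset O_F$. Using the long exact sequences of the closed orbit stratification and the five lemma, the Gysin map $H^i(\partial\overline{Z_P})\to H^{i+2}(\partial X)$ is then an isomorphism for $i>d-2$ and onto for $i=d-2$.
\item[(iii)] Apply the five lemma to the compactly supported sequences of $T\subset X\supset\partial X$ and $Z_P\subset\overline{Z_P}\supset\partial\overline{Z_P}$, linked by compatible Gysin maps. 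This gives the claim for $Z_P\subset T$.
\end{itemize}
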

Therefore, we know the cohomology of $Z_P$ except for the middle one $H^{d-1}_c(Z_P, \mathbb{Q})$. This motivates the following definition. cf. \cite{batyrev_variations_1993}.

\begin{dfn}
    Define the \emp{primitive cohomology} of $Z_P$ to be the kernel of $\phi_i$
    \[ PH^i_c(Z_P) \coloneqq \ker \phi_i. \] 
\end{dfn}
 Since mixed Hodge structures form an abelian category, the cohomology $ PH^i_c(Z_P)$ carries a mixed Hodge structure. 
Multiplying with an overall sign, so that its coefficients are always nonnegative,  we define
 \[ E_{prim}(Z_P; u,v) \coloneqq 
  \sum_{p+q \leq d-1} h^{p,q} (PH^{d-1}_c(Z_P, \Q)) u^p v^q  . \] 
 
 Since we know that $ h^{p,p}(H^{n+p}((\mathbb{C}^*)^n, \mathbb{Q})) = \binom{n}{p}$, we can easily relate $E_{prim}$ to the Hodge-Deligne polynomial of $Z_P$:
\begin{equation}
\label{eprim_hyp} E_{prim}(Z_P; u,v) = (-1)^{d-1} E(Z_P; u,v) + \frac{(1- u v)^{d} -1 }{ u v }.  \end{equation}

\subsection{Bivariate Ehrhart theory}

Let $P \subset \R^d$ be a $d$-dimensional lattice polytope. It is possible to express the polynomials $E(Z_P;u,v)$ and thus $E_{prim}(Z_P;u,v)$ purely in combinatorial terms. This has been done by in \cite{batyrev_mirror_1996, borisov_string_2003}, in particular see the last equation of Proposition 5.5 in \cite{borisov_string_2003}. We refer to \cite{katz_local_2016,borger_thin_2023} for the details. 

In \cite[Def.~7.5]{katz_local_2016} Katz and Stapledon define a bivariate version of the $h^*$-polynomial $h^*(P;u,v)$ which they call \emp{mixed $h^*$-polynomial} of $P$. As the term `mixed' is typically used in convex geometry for tuples of convex bodies and, moreover, a mixed version of the $h^*$-polynomial had already been defined in \cite{mixedEhrhart}, we prefer the term \emp{bivariate $h^*$-polynomial}. Since we would like to avoid in this paper any unnecessary technicality, let us just present its elegant algebro-geometric explanation:

\be \label{eprim} h^*(P; u,v) = 1 + u~ v ~E_{prim}(Z_P;u,v).\ee

Note that this is a bivariate polynomial with nonnegative integer coefficients. Denoting the coefficients of the bivariate $h^*$-polynomial as elements of an \emp{$h^*$-diamond}, see \eqref{hodge-diamond}, we get
\[h^*(P;u,v) = 1+ \sum_{p+q \le d-1} h^*_{p,q} u^{p+1} v^{q+1}.\]

We remark that from the bivariate $h^*$-polynomial we easily recover the classical $h^*$-polynomial: $h^*(P;t,1) = h^*_P(t)$. 
The degree of $h^*_P(t)$ is an important measure of the complexity of a lattice polytope and is called the \emp{degree} $\deg(P)$ of $P$. Note that the degree is at most $d$.

We have $h^*(P;u,v) = h^*(P;v,u)$ and the degree of $h^*(P;u,v)$ is at most $d+1$. We define the \emp{bivariate local $h^*$-polynomial} or \emp{bivariate $\ell^*$-polynomial} as the top degree part of the bivariate $h^*$-polynomial:
\[\ell^*(P;u,v) := \sum_{p+q = d-1} h^*_{p,q} u^{p+1} v^{q+1}.\]

Again, specializing to the univariate case we get the \emp{local $h^*$-polynomial} or \emp{$\ell^*$-polynomial} $\ell^*_P(t) := \ell^*(P;t,1) = \sum_{i=1}^d \ell^*_i t^i$, where $\ell^*_i=h^*_{i-1,d-i}$ for $i=1, \ldots, d$. It is a palindromic polynomial with respect to symmetry along $\frac{d+1}{2}$. Its coefficient vector $(\ell^*_1, \ldots, \ell^*_d)$ is the \emp{local $h^*$-vector} or \emp{$\ell^*$-vector} of $P$ and equals the middle row of the $h^*$-diamond. We have that $\ell^*_1=\ell^*_d$ equals the number of interior lattice points of $P$. In the case of special interest where $\ell_P(t)=0$, so the Hodge vector vanishes, we call $P$ \emp{thin}. $P$ is called \emp{trivially thin} if the dimension is at least twice the degree. Note that by palindromicity of the $\ell^*$-vector, trivially thin polytopes are automatically thin. For more about the properties of the local $h^*$-polynomial and about thin polytopes, we refer to \cite{borger_thin_2023}.

We note that if $P$ is just a lattice point, then $h^*(P;u,v)=1$ and $\ell^*(P;u,v)=0$.

\smallskip

Rewriting \eqref{eprim_hyp} using the bivariate Ehrhart theory notation we get:
\begin{equation}  E(Z_P; u,v) = \frac{1}{uv} \left[(-1)^{d-1} h^*(P; u,v) + (uv- 1)^{d} \right].
\label{E-hyper}
\end{equation}
We will generalize this formula in \cref{prop: mixed_h_relation} below.

\begin{ex}
Let us illustrate the $h^*$-diamond of a $3$-dimensional lattice polytope $P$:

\begin{center}
\begin{tabular}{ccccc}
& & 0 & &\\
& 0 && 0 &\\
$h^*_{2,0}$ && $h^*_{1,1}$ && $h^*_{0,2}$\\
& $h^*_{1,0} $& & $h^*_{0,1}$&\\
&&$h^*_{0,0}$&&
\end{tabular}
\end{center}

These numbers have the following explicit combinatorial expressions (see \cite[Example~8.9]{katz_local_2016}), where $v^*$ denotes the number of vertices of $P$, $e^*$ and $f^*$ denote the number of lattice points in the relative interior of edges respectively facets of $P$, $i^*$ denotes the number of lattice points in the interior of $P$, and $(2i)^*$ the number of lattice points in the interior of $2P$:

\begin{center}
\begin{tabular}{ccccc}
& & 0 & &\\
& 0 && 0 &\\
$i^*$ && $(2i)^*\!\!-\!4i^*\!\!-\!f^*$ && $i^*$\\
& $f^*$ & & $f^*$&\\
&&$v^*\!\!+\!e^*\!\!-\!4$&&
\end{tabular}
\end{center}
\end{ex}

We remark that a lower bound theorem of Katz and Stapledon \cite[p.~184]{katz_local_2016} yields the non-trivial combinatorial identity $i^* \le (2i)^*-4i^*-f^*$, see also \cite[Cor.~4.6]{borger_thin_2023}.

\subsection{Isomorphisms, Cayley polytopes, free joins and lattice pyramids}
\label{subsec:Cayley}
Let us first recall that \emp{ isomorphisms} or \emp{ unimodular equivalences} of lattice polytopes $P,Q \subset \R^d$ are given by affine lattice isomorphisms: we write $P \cong Q$ if and only if there are $A \in \GL_d(\Z)$ and $b \in \Z^d$ with $Q = A \cdot P + b$.

Now, let us give the definition of Cayley polytopes which turn up quite naturally when considering complete intersections, cf. \cite{GKZ, batyrev_combinatorial_2008, di_rocco_discrete_2016}.
\begin{dfn}
Let $P_1, \ldots, P_k \subset \R^d$ be lattice polytopes. Then 
\[P_1 * \cdots * P_k := \conv(P_1 \times \{e_1\},  \ldots, P_k \times \{e_k\}) \subset \R^{d+k}\]
is called the \emp{Cayley polytope} of $P_1, \ldots, P_k$. Alternatively,
\be P_1 * \cdots * P_k \cong \conv(P_1 \times \{0\}, P_2 \times \{e_1\}, \ldots, P_k \times \{e_{k-1}\}) \subset \R^{d+k-1}.\label{cayley-second}\ee
\end{dfn}
Note that the dimension of $P_1 * \cdots * P_k$ equals $\dim(P_1 + \cdots + P_k)+k-1$.\\ Its degree is bounded by $\dim(P_1 + \cdots + P_k) \le d$, see \cite[Proposition~1.12]{batyrev_multiples_2007}.

\smallskip

In Ehrhart theory, the following special cases of Cayley polytopes play an important role.

\begin{dfn}
Let $P \subset \R^n$ and $Q \subset \R^m$ be lattice polytopes. Then 
\[P \circ Q := \conv(P \times \{0\} \times \{0\}, \{0\} \times Q \times \{1\}) \subset \R^{n+m+1}\]
is called the \emp{free join} of $P$ and $Q$. We call the free join of $P$ with a lattice point, a \emp{lattice pyramid} $\pyr(P)$ over $P$.
\end{dfn}

We remark that $\dim(P \circ Q) = \dim(P)+\dim(Q)+1$.

Recall that lattice simplices whose vertices form an affine lattice basis are called \emp{unimodular simplices}. The standard example of a $d$-dimensional unimodular simplex is $\conv(0,e_1, \ldots, e_d)$, which is denoted by $\Delta_d$. One can also describe unimodular simplices as successive lattice pyramids over a lattice point.

\smallskip
The bivariate $h^*$-polynomial is multiplicative with respect to free joins. 
\begin{prop} \label{prop: multiplicativity}
Let $P \subset \R^n$ and $Q \subset \R^m$ be lattice polytopes. Then 
\[h^*(P \circ Q;u,v)=h^*(P;u,v) \; h^*(Q;u,v).\]
In particular,
\[\ell^*(P \circ Q;u,v)=\ell^*(P;u,v) \; \ell^*(Q;u,v).\]
\end{prop}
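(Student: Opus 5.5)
The plan is to use the algebro-geometric characterization \eqref{eprim} together with \eqref{E-hyper}. For a full-dimensional $P$, \eqref{E-hyper} gives $h^*(P;u,v)$ as a polynomial expression in $E(Z_P;u,v)$. It therefore suffices to show that the $E$-polynomial of the generic hypersurface $Z_{P\circ Q}$ in $(\C^*)^{n+m+1}$ is determined, via a product formula, by those of $Z_P$ and $Z_Q$. Take a generic Laurent polynomial supported on $P\circ Q$. In the coordinates $(x,y,t)\in(\C^*)^n\times(\C^*)^m\times\C^*$ it has the form $f(x)+t\,g(y)$, where $f$ and $g$ are generic with Newton polytopes $P$ and $Q$. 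The argument will first treat full-dimensional $P$ and $Q$; the lower-dimensional cases reduce to this by \eqref{low-dim}, or alternatively by noting that $h^*$ is intrinsic to the affine lattice spanned by the polytope.

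Stratify $Z_{P\circ Q}=\{f(x)+t g(y)=0\}$ according to whether $g(y)=0$.
\begin{itemize}
\item If $g(y)\neq 0$, then $t=-f(x)/g(y)$ is determined, and it must be nonzero. So this stratum is $\{(x,y): f(x)\neq 0,\ g(y)\neq 0\}$, and its $E$-polynomial is $\bigl((uv-1)^n-E(Z_P)\bigr)\bigl((uv-1)^m-E(Z_Q)\bigr)$.
\item If $g(y)=0$, then $f(x)=0$ and $t$ is arbitrary. This stratum is $Z_P\times Z_Q\times\C^*$, with $E$-polynomial $(uv-1)E(Z_P)E(Z_Q)$.
\end{itemize}
By additivity, $E(Z_{P\circ Q})$ is the sum of these two terms. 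The genericity (nondegeneracy) of $f+tg$ with respect to $P\circ Q$ follows from that of $f$ and $g$, because faces of $P\circ Q$ are joins of faces of $P$ and $Q$.

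Next, substitute $E(Z_P)=\frac{1}{uv}[(-1)^{n-1}h^*_P+(uv-1)^n]$, and similarly for $Q$. Writing $w=uv$, the two factors in the first stratum simplify to $(w-1)^n-E(Z_P)=\frac{1}{w}[(w-1)^{n+1}+(-1)^n h^*_P]$, and likewise for $Q$. The expected identity is then
\[ w\,E(Z_{P\circ Q}) = (-1)^{n+m}h^*_Ph^*_Q+(w-1)^{n+m+1}, \]
which is exactly \eqref{E-hyper} for $P\circ Q$ with $h^*(P\circ Q)=h^*_Ph^*_Q$. Checking this is routine algebra: expand both contributions and observe that the cross terms $\pm(w-1)^{n+1}h^*_Q$ and $\pm(w-1)^{m+1}h^*_P$ cancel between the two strata. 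This cancellation is the step I would verify most carefully, including the signs.

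The statement about $\ell^*$ then follows from degrees. Since $\dim(P\circ Q)=n+m+1$, the top-degree part of $h^*(P\circ Q)$ lies in total degree $n+m+2=(n+1)+(m+1)$. The degree of $h^*_P$ is at most $n+1$ and that of $h^*_Q$ at most $m+1$, so the top-degree part of the product is the product of the top-degree parts. Two degenerate situations should be checked separately: the case where one factor is a point (where $h^*=1$ and $\ell^*=0$, consistent with lattice pyramids having vanishing $\ell^*$), and the genericity of $f+tg$ mentioned above. I expect neither of these to be a serious obstacle.
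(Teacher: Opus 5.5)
Your proof is correct, but it takes a different route from the paper's. The paper does not stratify $Z_{P\circ Q}$ directly. It regards $Z_P\times Z_Q$ as the generic complete intersection $Y\subset(\C^*)^{n+m}$ cut out by polynomials with Newton polytopes $P\times\{0\}$ and $\{0\}\times Q$. It observes that the Cayley polytope of these two is exactly $P\circ Q$, in the form \eqref{cayley-second}. It then reads off $h^*(P\circ Q;u,v)$ by comparing \cref{prop: mixed_h_relation} for $k=2$ with the product $E(Z_P)\,E(Z_Q)$ computed via \eqref{E-hyper}.

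Your decomposition of $\{f+tg=0\}$ according to whether $g$ vanishes is essentially the Cayley trick done by hand for two summands inside the torus. It needs only \eqref{E-hyper} and the additivity and multiplicativity of $E$-polynomials. In particular it avoids the Danilov--Khovanskii formula \eqref{E-tilde} for $\tilde Z$, on which \cref{prop: mixed_h_relation} rests.

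The algebra you flagged does close up. Write $w=uv$, $h^*_P=h^*(P;u,v)$, $h^*_Q=h^*(Q;u,v)$. The first stratum contributes $w^{-2}\bigl[(w-1)^{n+m+2}+(-1)^{n+m}h^*_Ph^*_Q\bigr]$ plus cross terms. The second contributes $w^{-2}\bigl[(w-1)^{n+m+1}+(-1)^{n+m}(w-1)h^*_Ph^*_Q\bigr]$ minus the same cross terms. The total is $w^{-1}\bigl[(w-1)^{n+m+1}+(-1)^{n+m}h^*_Ph^*_Q\bigr]$, as required.

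As for what each approach buys: the paper's version is a two-line corollary of machinery it needs anyway, and it illustrates \cref{prop: mixed_h_relation}. Yours is self-contained. It also makes explicit several points the paper leaves tacit: the nondegeneracy of $f+tg$ via the faces of the join, the reduction to full-dimensional factors, and the degree argument that deduces the $\ell^*$ statement.
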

This statement can be obtained from the  multiplicativity of Hodge-Deligne polynomials with respect to the Cartesian product. We give a short proof after  \cref{prop: mixed_h_relation}. 
Alternatively, a purely combinatorial proof can be obtained using the multiplicativity of the $\ell^*$-polynomial \cite[Remark~4.6(5)]{nill_gorenstein_2013} with respect to free joins and the multiplicativity of the toric $g$-polynomial with respect to products of posets.

In particular, we recover the well-known multiplicativity of the $h^*$-polynomial \cite[Lemma~1.3]{henk_lower_2009} as well as that of the $\ell^*$-polynomial. As an important case, note that 

\be h^*(\pyr(P);u,v)=h^*(P;u,v) \text{ and } \ell^*(\pyr(P);u,v)=0. \label{invformula} \ee

 \subsection{A closed formula for the Hodge-Deligne polynomial of an affine complete intersection in the torus}
 
  Consider a system of $k$ equations in $(\mathbb{C}^*)^d$ given by $f_1 = f_2 = \ldots = f_{k} =0$, where $f_i$  are generic Laurent polynomials with respect to their supports. Let $P_i$ be the corresponding Newton polytopes of $f_i$. Let $Y$ be the complete intersection in $(\mathbb{C}^*)^d$ defined as the vanishing locus of the above system. Danilov and Khovanskii \cite{danilov_newton_1987} gave a combinatorial algorithm for computing the Hodge-Deligne polynomial $E(Y;u,v)$. We use it here to obtain an explicit formula in terms of the bivariate $h^*$-polynomials.    For $\varnothing \not= I \subseteq [k]$ we define $P^I$ as the Cayley polytope of $(P_i)_{i \in I}$, and $d_I$ as the dimension of the Minkowski sum of $(P_i)_{i \in I}$. Hence, $\dim(P^I) = d_I+|I|-1$. Moreover, we define $P^{\varnothing} := \{0\}$ with dimension $d_\varnothing := 0$.

 \begin{prop} \label{prop: mixed_h_relation} In this situation, the following holds:
   \[ E (Y; u,v) =\frac{1}{(uv)^k} \sum_{I \subseteq[k]} (-1)^{d_I -\abs{I}} (uv-1)^{d-d_I} h^*(P^I; u,v).\]  
\end{prop}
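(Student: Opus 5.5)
We argue by the Cayley trick combined with inclusion–exclusion on the complement of the union of the hypersurfaces. Let $Z_i = \{f_i = 0\} \subseteq (\C^*)^d$, and write $Z_I := \bigcap_{i\in I} Z_i$ for $I \subseteq [k]$, with $Z_\varnothing = (\C^*)^d$. The key geometric input is the Cayley trick. For $\varnothing \neq I \subseteq [k]$, consider the Laurent polynomial
\[ F_I(x,\lambda) = \sum_{i\in I} \lambda_i f_i(x) \]
on $(\C^*)^d \times (\C^*)^{|I|}$. Its Newton polytope is exactly the Cayley polytope $P^I$ in the first description of the Definition of Cayley polytopes, and genericity of the $f_i$ gives nondegeneracy of $F_I$ with respect to $P^I$. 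Dividing by one $\lambda_{i_0}$, the hypersurface $\{F_I=0\}$ is isomorphic to $\C^* \times W_I$, where $W_I = \{ f_{i_0} + \sum_{i\neq i_0} \mu_i f_i = 0\} \subseteq (\C^*)^d \times (\C^*)^{|I|-1}$. This is the hypersurface attached to the polytope in \eqref{cayley-second}.

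We compute $E(W_I)$ by fibering over $x \in (\C^*)^d$. The fiber is an affine hyperplane in $(\C^*)^{|I|-1}$ with coefficients $f_i(x)$. Its $E$-polynomial depends only on which $f_i(x)$ vanish. If the set of nonvanishing indices is $J \subseteq I$, then the fiber is either empty, all of $(\C^*)^{|I|-1}$, or a generic hyperplane section of a torus. Each of these has an $E$-polynomial that is explicit in $uv$; for example, $\{\mu \in (\C^*)^{m} : c_0 + \sum c_i\mu_i = 0\}$ with all $c_i \neq 0$ has $E = \big((uv-1)^{m} - (-1)^{m}\big)/uv$. Stratifying $(\C^*)^d$ by the locally closed sets $\{x : f_i(x)=0 \iff i \in I\setminus J\}$ and using additivity and multiplicativity of $E$, we obtain $E(\{F_I=0\})$ as a $\Z[uv]$-linear combination of the $E(Z_{K})$ for $K \subseteq I$.

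On the other side, \eqref{E-hyper} applied to the $(d_I+|I|-1)$-dimensional polytope $P^I$, together with \eqref{low-dim} to pass to the ambient torus of dimension $d+|I|$, gives
\[ E(\{F_I=0\}) = \frac{(uv-1)^{d-d_I}}{uv}\Big[(-1)^{d_I+|I|} h^*(P^I;u,v) + (uv-1)^{d_I+|I|}\Big]. \]
Equating the two expressions yields a triangular system over the Boolean lattice of subsets of $[k]$. It expresses $h^*(P^I)$ in terms of the $E(Z_K)$ with $K\subseteq I$, and the coefficient of $E(Z_I)$ itself is a monomial times a power of $uv$. We then invert this system by Möbius inversion on the Boolean lattice to isolate $E(Y) = E(Z_{[k]})$. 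The $(uv-1)^{\cdot}$ correction terms should telescope, since $\sum_{I}(-1)^{|I|}\binom{k}{|I|}$-type sums vanish, leaving only the $I=\varnothing$ term. That term is $(uv-1)^d h^*(\{0\}) = (uv-1)^d$, which the stated formula reproduces with sign $+1$.

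The main obstacle is the bookkeeping of signs and powers of $uv$ in the inversion. To control it, we would organize the computation through the complement instead: the set $\{x : f_i(x)\neq 0 \text{ for all } i\in I\}$ is related to $\{F_I = 0\}$ more cleanly by summing over $\lambda$-orbits. Concretely, we plan to show that
\[ (uv)^{|I|}\, E\Big(\bigcap_{i\in I}\{f_i=0\}\Big) \]
satisfies the inclusion–exclusion identity directly, and then verify it by induction on $k$. The base case $k=1$ is \eqref{E-hyper} combined with \eqref{low-dim}. As a sanity check before doing this, we would test the final formula on $k=1$ and on $d=k=1$ with two generic points.
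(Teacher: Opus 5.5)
Your route is viable and genuinely different from the paper's. However, the one formula on which everything hinges is wrong as displayed, so the system you propose to invert is inconsistent. The hypersurface $X_I:=\{F_I=0\}$ lies in a torus of dimension $d+|I|$, while $P^I$ has dimension $d_I+|I|-1$. The codimension is therefore $d-d_I+1$; your exponents are those of a $(d_I+|I|)$-dimensional polytope, such as the pyramid over $P^I$. The correct formula is
\[ E(X_I)=\frac{(uv-1)^{d-d_I+1}}{uv}\Big[(-1)^{d_I+|I|}h^*(P^I;u,v)+(uv-1)^{d_I+|I|-1}\Big]. \]
The case $I=\{i\}$ already exposes the slip. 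There $X_I=Z_i\times\C^*$, so $E(X_I)=(uv-1)E(Z_i)$. Your version would then force $uv-1$ to divide $h^*(P_i;u,v)$ when $d_i=d$. This is impossible, because $h^*(P_i;1,1)$ is the normalized volume of $P_i$.

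Once this is fixed, you need no telescoping across different $I$ and no induction on $k$. Your fiber computation is legitimate. Over the stratum where exactly the $f_j$ with $j\in J$ are nonzero, the substitution $\lambda_j\mapsto\lambda_jf_j(x)$ turns the preimage into an honest product, so multiplicativity applies. Using $\sum_s\binom{m}{s}(-1)^s=[m=0]$ and $\sum_s\binom{m}{s}(uv-1)^{m-s}=(uv)^m$, it gives
\[ E(X_I)=\frac{(uv-1)^{d+|I|}}{uv}+\sum_{K\subseteq I}(-1)^{|I|-|K|}(uv-1)(uv)^{|K|-1}E(Z_K). \]
The terms $(uv-1)^{d+|I|}/uv$ cancel inside this single equation. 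Multiplying what remains by $(-1)^{|I|}uv/(uv-1)$ yields
\[ \sum_{K\subseteq I}(-uv)^{|K|}E(Z_K)=(-1)^{d_I}(uv-1)^{d-d_I}h^*(P^I;u,v). \]
This also holds for $I=\varnothing$ with the conventions $P^\varnothing=\{0\}$, $d_\varnothing=0$. Möbius inversion on the Boolean lattice, evaluated at $I=[k]$, is exactly the claimed formula.

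For comparison, the paper works with Danilov--Khovanskii's hypersurface $\{1+\sum_i y_if_i=0\}\subseteq(\C^*)^d\times\C^k$. Because the $y_i$ range over $\C$ rather than $\C^*$, its fiber over any $x\notin Y$ is $\C^{k-1}$ regardless of which $f_i(x)$ vanish, which gives \eqref{E-tilde} at once. Stratifying $\C^k$ by coordinate strata then produces nondegenerate hypersurfaces whose Newton polytopes are lattice pyramids over the $P^I$. Pyramid invariance \eqref{invformula} then finishes the proof without any inversion. Your version stays inside tori and uses $P^I$ directly, so it needs neither \eqref{E-tilde} nor pyramid invariance. The price is the vanishing-pattern bookkeeping and a final Möbius inversion.
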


Note that with our convention for $I=\varnothing$ this formula agrees for $k=1$ with \eqref{E-hyper}.

\begin{proof}

We remark that it is enough to prove this in the case $\dim(P_1 + \cdots + P_k)=d$. Otherwise, let $Y'$ be the complete intersection corresponding to the $d_{[k]}$-dimensional subspace $\aff(P_1 + \cdots + P_k)$ with respect to the lattice $\aff(P_1, \ldots, P_k) \cap \Z^d$. As in \eqref{low-dim} we see that 
\[E(Y;u,v)=(uv-1)^{d-d_{[k]}} E(Y';u,v).\]
If for $E(Y';u,v)$ the respective statement in \cref{prop: mixed_h_relation} holds, then we see that it also holds for $E(Y;u,v)$.

So, let us assume that $\dim(P_1 + \cdots + P_k)=d$. We define $P := P^{[k]} = P_1 * \cdots * P_k$, so $\dim P = d+k-1$. Here, we use the isomorphic embedding of $P$ in $\R^{d+k-1}$ as in \eqref{cayley-second}. Let $Z_P \subseteq (\mathbb{C}^*)^{d+k-1}$ be a generic hypersurface with Newton polytope $P$. It can be defined  by the vanishing of 
\[ Z_P ~:~ f_1 + y_2 f_2 + \ldots + y_{k} f_{k} =0.  \] 

Instead of $Z_P$ let us now consider the hypersurface  $\tilde{Z} \subseteq (\mathbb{C}^*)^{d} \times \mathbb{C}^k$
\[ \tilde{Z} ~:~ 1+ y_1 f_1 + y_2 f_2 + \ldots + y_{k} f_{k} =0. \] 
Danilov and Khovanskii \cite[Section 6]{danilov_newton_1987} gave the following simple formula 
 \begin{equation}
     E(\Tilde{Z}; u,v) = (u v)^{k-1} \left( \left( uv -1 \right)^{d} - E (Y; u,v) \right). \label{E-tilde}
     \end{equation}

Note  that $\tilde{Z}$  is not a hypersurface in an algebraic torus. However, there is a stratification of $\tilde{Z}$ given by affine hypersurfaces in tori that was described in \cite{di_rocco_discrete_2016}. For $\varnothing \not= I \subseteq [k]$ define 
\[ {Z}_I = \Tilde{Z} \cap \{ y_j \neq 0 ~ :~ j \in I\} \cap \{ y_j =0 ~: ~j \notin I \}. \] 
Each of these is now an affine hypersurface in $(\mathbb{C}^*)^{d+|I|}$. Let us denote by $Q^I$ the corresponding Newton polytopes. Note that each $Q^I$ equals the lattice pyramid over the Cayley polytope $P^I$, so \[ \dim Q^I = d_I+|I|. \] 

From the stratification of $\Tilde{Z}$ and \eqref{E-tilde} we get
\[ (u v)^{k-1} \left( \left( u v -1 \right)^{d} - E(Y; u,v) \right) = \sum_{\varnothing\not= I \subseteq [k]} E({Z}_I ; u,v).   \] 

Plugging in the combinatorial formula \eqref{E-hyper} for $E({Z}_I ; u,v)$ and applying \eqref{low-dim} to take into account the difference $d+\abs{I}-(d_I+\abs{I})=d-d_I$ of the dimensions of $Q^I$ and the ambient space $\R^{d+\abs{I}}$ we get:

\[ (uv)^{k-1} \left( \left( u v -1 \right)^{d} - E(Y; u,v) \right) =\] 
\[\sum_{\varnothing\not=I \subseteq [k]} \frac{(uv-1)^{d-d_I}}{uv} \left[(-1)^{d_I+\abs{I}-1} h^*(Q^I;u,v)+(uv-1)^{d_I+\abs{I}}\right].\]

By \eqref{invformula} we know that the bivariate $h^*$-polynomial is invariant under lattice pyramids, so we get for $E (Y; u,v)$ the following expression:
\begin{equation}(uv-1)^d-\frac{1}{(uv)^k} \sum_{\varnothing\not=I \subseteq [k]}  \bigg[ (-1)^{d_I+\abs{I}-1}(uv-1)^{d-d_I} h^*(P^I;u,v)+ (uv-1)^{d+\abs{I}} \bigg].\label{newexpr}\end{equation}
It is direct to see that
\[\sum_{I \subseteq [k]} (uv-1)^{|I|} = (uv)^k,\]
so 
\[\sum_{\varnothing\not=I \subseteq [k]} (uv-1)^{d+|I|} = (uv-1)^d((uv)^k-1).\]
Plugging this into \eqref{newexpr} we get the desired formula for $E(Y;u,v)$.
\end{proof}

Note that for $k>d$, the left-hand side of \cref{prop: mixed_h_relation} vanishes, and thus it gives us a nontrivial combinatorial identity for the right-hand side.  

\begin{proof}[Proof of \cref{prop: multiplicativity}]
Let $Z_P$ and $Z_Q$ be the hypersurfaces in $(\C^*)^{n+m}$ corresponding to $P\times \{0\}$ and $\{0\} \times Q$ respectively. Then the complete intersection $Y$ from the proof of \cref{prop: mixed_h_relation} is exactly the product $Z_P \times Z_Q$. Since we have $E(Z_P \times Z_Q)=E(Z_P) \cdot E(Z_Q)$, then using the equations \eqref{eprim_hyp} and \eqref{eprim} together with \cref{prop: mixed_h_relation} one arrives at the desired multiplicativity formula. 
\end{proof}

\begin{ex} \label{prop:relation_to_MV}
Suppose that $k=d$, in this case the Hodge-Deligne polynomial $E(Y;u,v)$ is just the number of points of $Y$, which equals  the mixed volume of $P_1,\ldots, P_d$ by the Bernstein–Khovanskii–Kushnirenko (BKK) theorem \cite{bernstein_number_1975,kouchnirenko_polyedres_1976}. Suppose also that  for each $\varnothing \neq I \subsetneq [d]$ we have $d_I - \abs{I} >0$, for example, if all $P_i$ are full dimensional, then the Hodge vector of the Cayley sum $P^{[d]}$ is 
\[ l^*(P_1 * \ldots *  P_d, t) = \left(MV(P_1,\ldots,P_d)-1\right)\cdot t^{d}.  \]
\label{easy-cayley}
\end{ex}
 
 \section{Generalized Lawrence twists} \label{sec:proof}

Here is our main result. Theorem~\ref{thm:main} is a special case of it (see situation (1)).

\begin{thm} \label{thm:main-local} \label{Cayley-formula}
Let $P_1,\ldots,P_k, P_{k+1} \subset \R^d$ be lattice polytopes 
such that \[\dim(P_1 + \cdots + P_k) \le k \text{ and } \dim(P_1 + \cdots + P_{k+1})=d \geq 2.\]
Let $U$ be any $k$-dimensional rational subspace of $\R^d$ containing $P_1, \ldots, P_k$ if $k \le d$, and set $U := \R^d$ otherwise.
We denote by $V$ the mixed volume $\MV(P_1, \ldots, P_k)$ of $P_1, \ldots, P_k$ if $k \le d$, and set $V := 0$ otherwise.
 Then the bivariate $h^*$-polynomial of the Cayley polytope $P_1 * \cdots *  P_{k+1}$ equals
\be V (uv)^k h^*(\proj_U(P_{k+1}); u,v) + \sum_{I \subsetneq[k+1]: k+1 \in I} (-1)^{k -\abs{I}} (1-uv)^{d-d_I} h^*(P^I; u,v).
\label{main-eq}
\ee
In particular, we have
\be \ell^*(P_1 * \cdots *  P_{k+1};u,v) = V (uv)^k \ell^*(\proj_U(P_{k+1});u,v)
\label{main-local}\ee
in each of the following two situations: \begin{enumerate}
    \item For each $I \subsetneq[k+1]$ with $k+1 \in I$ we have 
    \begin{itemize}
        \item $|I|+(d-d_I) \le k$ or
        \item $|I|+(d-d_I)=k+1$ and $P^I$ is thin. 
    \end{itemize}
    For instance, this holds if $\dim(P_{k+1})=d$.
    \item $k\ge d$, which implies that $P_1 * \cdots *  P_{k+1}$ is trivially thin.
    \end{enumerate}
\end{thm}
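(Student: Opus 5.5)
The plan is to compute the Hodge-Deligne polynomial of one complete intersection in two ways and compare. Let $Y\subseteq(\C^*)^d$ be the generic complete intersection $f_1=\cdots=f_{k+1}=0$ with Newton polytopes $P_1,\ldots,P_{k+1}$. First, apply \cref{prop: mixed_h_relation} with $k+1$ equations:
\[(uv)^{k+1}E(Y;u,v)=\sum_{I\subseteq[k+1]}(-1)^{d_I-|I|}(uv-1)^{d-d_I}h^*(P^I;u,v).\]
Split the sum into three groups: the terms with $k+1\notin I$, the single term $I=[k+1]$, and the remaining terms. For $I=[k+1]$ we have $d_I=d$, and this term is $(-1)^{d-k-1}h^*(P_1*\cdots*P_{k+1};u,v)$.

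The terms with $I\subseteq[k]$ are, by \cref{prop: mixed_h_relation} again, exactly $(uv)^kE(Y_k;u,v)$. Here $Y_k$ is the complete intersection $f_1=\cdots=f_k=0$.

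Second, compute $E(Y)$ geometrically using $U$. Since $f_1,\ldots,f_k$ only involve monomials from $U\cap\Z^d$, they are pulled back from the torus $T_U$ of dimension $k$ (when $k\le d$). By BKK, their common zero set there consists of $V$ points. Hence $Y_k$ is a disjoint union of $V$ cosets of the subtorus $T'\cong(\C^*)^{d-k}$, and $E(Y_k)=V(uv-1)^{d-k}$. If $k>d$, or if $\dim(P_1+\cdots+P_k)<k$, then generically $Y_k=\varnothing$ and $V=0$, which is consistent with the formula.

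On each coset, $f_{k+1}$ restricts to a Laurent polynomial on $T'$ whose Newton polytope is $\proj_U(P_{k+1})$ with respect to the lattice $\pi(\Z^d)$. Thus $Y$ is a disjoint union of $V$ hypersurfaces $Z_{\proj_U(P_{k+1})}$ in $(\C^*)^{d-k}$. Using \eqref{E-hyper}, combined with \eqref{low-dim} if the projection is not full-dimensional, this gives
\[E(Y)=V\,(uv)^{-1}\bigl[(-1)^{d-k-1}h^*(\proj_U P_{k+1})+(uv-1)^{d-k}\bigr].\]

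Equating the two expressions, the contributions $V(uv)^k(uv-1)^{d-k}$ cancel. Solving for $h^*(P_1*\cdots*P_{k+1})$ and rewriting $(uv-1)^{d-d_I}=(-1)^{d-d_I}(1-uv)^{d-d_I}$ turns the sign into $(-1)^{k-|I|}$, which yields \eqref{main-eq}.

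The step I expect to be the main obstacle is justifying genericity. One must check that, for generic coefficients, the restriction of $f_{k+1}$ to each of the $V$ cosets is nondegenerate with respect to $\proj_U(P_{k+1})$. One must also check that the identification of the Newton polytope survives the change of lattice to $\pi(\Z^d)$. I would argue this by choosing a splitting $\Z^d\cong(U\cap\Z^d)\oplus L'$ and viewing the coset points as fixed values of the $T_U$-coordinates. Since the $T_U$-coordinates of these points are nonzero, the restricted coefficients of $f_{k+1}$ are sums of the original coefficients weighted by nonzero monomials. These sums remain generic, because the coefficients of $f_{k+1}$ are chosen independently of $f_1,\ldots,f_k$.

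For \eqref{main-local}, take the degree $d+k+1$ part of \eqref{main-eq}, noting that $\dim P_1*\cdots*P_{k+1}=d+k$. The first term has degree at most $k+(d-k+1)$, and its top part is $V(uv)^k\ell^*(\proj_U P_{k+1})$. A summand $(1-uv)^{d-d_I}h^*(P^I)$ has degree at most $2(d-d_I)+d_I+|I|=d+(d-d_I)+|I|$. It therefore contributes nothing in degree $d+k+1$ if $|I|+d-d_I\le k$. In the equality case its top part is $\pm(uv)^{d-d_I}\ell^*(P^I)$, which vanishes exactly when $P^I$ is thin. This proves situation (1). If $\dim P_{k+1}=d$, then $d_I=d$ and $|I|\le k$ for every $I\subsetneq[k+1]$, so the condition holds.

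In situation (2), either $k>d$, so $V=0$, or $k=d$, so the projection is a point with $\ell^*=0$. In both cases the right side vanishes and every other summand has too small a degree. Finally, trivial thinness follows because $\deg(P_1*\cdots*P_{k+1})\le d\le (d+k)/2$.
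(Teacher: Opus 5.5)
Your proposal is correct and follows the paper's proof. Both apply the complete-intersection formula to $P_1,\ldots,P_{k+1}$ and replace the terms with $I\subseteq[k]$ by the BKK count $V$ (the paper applies the formula in $(\C^*)^k$ and multiplies by $(uv-1)^{d-k}$, which amounts to your $E(Y_k)=V(uv-1)^{d-k}$). Both then equate with the stratification of $Y$ into $V$ generic hypersurfaces with Newton polytope $\proj_U(P_{k+1})$ and read off the degree-$(d+k+1)$ part.

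There is one arithmetic slip. The first term has total degree at most $2k+(d-k+1)=d+k+1$, not $k+(d-k+1)$; the former is what your correct identification of its top part actually uses. Also, in situation (2), the trivial thinness you note at the end already gives $\ell^*=0$ on the left. So you do not need the unjustified claim that the other summands have too small degree, which does not follow from your bound $d+(d-d_I)+|I|$ alone (take $k=d$ and $P_{k+1}$ a point).
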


\begin{proof}
Let us consider the main case $k\leq d$ first. 
Let $f_1, \ldots, f_{k+1} \in \C[x^\pm_1, \ldots, x^\pm_d]$ be generic polynomials with Newton polytopes $P_1,\ldots, P_{k+1}$. As $U$ is a rational subspace, we can choose an appropriate lattice transformation of $\Z^d$, so we can assume that $U$ is simply the subspace generated by the first $k$ standard basis vectors and $f_1, \ldots, f_k \in \C[x^\pm_1, \ldots, x^\pm_k]$.

We denote the solution set of $f_1, \ldots, f_k$ in $(\C^*)^k$ by $\hat{Y}$. Note that $\hat{Y}$ is a finite set. It follows from the BKK-theorem that its size equals the mixed volume $V := \MV(P_1, \ldots, P_k)$. In particular, applying \cref{prop: mixed_h_relation} to $P_1, \ldots, P_k$ and using the notation of \cref{prop: mixed_h_relation} yields
   \begin{equation}V =\frac{1}{(uv)^k} \sum_{I \subseteq[k]} (-1)^{d_I -\abs{I}} (uv-1)^{k-d_I} h^*(P^I; u,v).\label{mv}\end{equation}   
Applying \cref{prop: mixed_h_relation} to $P_1, \ldots, P_{k+1}$ gives
   \[ E (Y; u,v) =\frac{1}{(uv)^{k+1}} \sum_{I \subseteq[k+1]} (-1)^{d_I -\abs{I}} (uv-1)^{d-d_I} h^*(P^I; u,v).\]
By taking \eqref{mv} into account, we get that $E(Y; u,v)$ equals
\begin{equation}
\frac{1}{(uv)^{k+1}} \left((uv-1)^{d-k} (uv)^k V + \sum_{I \subseteq[k+1]: k+1 \in I} (-1)^{d -\abs{I}} (1-uv)^{d-d_I} h^*(P^I; u,v)\right).
\label{E-eq1}
\end{equation}

On the other hand, $Y$ is stratified in $\{x'\} \times Z_{x'}$ for $x' \in \hat{Y}$, where $Z_{x'}$ is the hypersurface in $(\C^*)^{d-k}$ defined by the polynomial $g_{x'}(z):=f_{k+1}(x',z) \in \C[z^\pm_1, \ldots, z^\pm_{d-k}]$. Note that each $g_{x'}$ is a generic\footnote{
These Laurent polynomials are nondegenerate with respect to  $\text{Newt}(g_{x'}(z))$ for each $x'$, provided that the coefficients of $f_{k+1}$ are chosen appropriately. Specifically, let $f_{k+1}= \sum_{m \in P_{k+1} \cap \Z^d} c_m x^m$. For $f_{k+1}$ to be nondegenerate with respect to $P_{k+1}$, the coefficient vector $(c_m)_{m \in P_{k+1}}$ must lie in the complement of a finite collection of hypersurfaces in the coefficient space $\mathbb{C}^{|P_{k+1} \cap \mathbb{Z}^d|}$. Similarly, for each $x'$, the requirement that $g_{x'}(z)$ be nondegenerate with respect to its own Newton polytope excludes another finite set of hypersurfaces. Consequently, there exists a Zariski open subset of the coefficient space in which $f_{k+1}$ and all $g_{x'}(z)$ are simultaneously nondegenerate. } polynomial with respect to its Newton polytope which equals the projection $\proj_U(P_{k+1})$ of $P_{k+1}$ along $U$.  We may assume $k<d$ as otherwise (in case (2)), $Z_{x'}$ and hence $Y$ is empty. Note that $\proj_U(P_{k+1})$ is a full-dimensional lattice polytope in $\R^{d-k}$. By \eqref{E-hyper} each $Z_{x'}$ has the same Hodge-Deligne polynomial and as $|\hat{Y}|=V$ we get
\begin{equation}
E (Y; u,v) =  \frac{V}{uv} \left[(-1)^{d-k-1} h^*(\proj_U(P_{k+1}); u,v) + (uv- 1)^{d-k} \right].
\label{E-eq2}
\end{equation}

Equating \eqref{E-eq1} and \eqref{E-eq2} and isolating $h^*(P_1 * \cdots *  P_{k+1};u,v)$ we get our desired formula after some cancellation. 

For the case when $k>d$, the equations  \eqref{mv} and  \eqref{E-eq1} that one derives 
from \cref{prop: mixed_h_relation} still hold with $V=0$. Thus, from these two equations one arrives  at 
\[ h^*(P_1 * \ldots * P_{k+1};u,v) =\sum_{I \subsetneq[k+1]: k+1 \in I} (-1)^{k -\abs{I}} (1-uv)^{d-d_I} h^*(P^I; u,v). \]

For the additional statements we recall that the Cayley polytopes $P^I$ have dimensions $d_I+|I|-1$, $\proj_U(P_{k+1})$ has dimension $d-k$, and the local $h^*$-polynomial of an $n$-dimensional lattice polytope equals the top degree part of its bivariate $h^*$-polynomial which has degree $n+1$. So, we need to consider the degree $d+k+1$ part in \eqref{main-eq}.
The first term in \eqref{main-eq} has exactly $d+k+1$ as maximal degree and $ V (u v)^k \ell^*(\proj_U(P_{k+1};u,v)$ as the top degree part. Therefore, to arrive at \eqref{main-local} we need that for each $I$ the parts of $h^*(P^I;u,v)$  with degrees $d+k+1-2j$ for $j=0,\ldots, d-d_I$ vanish. 
The easiest possible case is when the maximal degree of $h^*(P^I;u,v)$, i.e., $d_I+\abs{I}$, is smaller than $2d_I-d+k+1$, i.e., $|I|+(d-d_I) \le k$. 
Another notable situation, is when the maximal degree of $h^*(P^I;u,v)$ is exactly  $2d_I-d+k+1$, but the part of $h^*(P^I;u,v)$ of this degree vanishes, i.e. $P^I$ is thin.  

In the situation (2) note that the degree of the Cayley polytope $P_1 * \cdots *  P_{k+1}$ is at most $d$ while its dimension is $d+k\ge 2d$, so it is trivially thin. The projection $\proj_U(P_{k+1})$ is just a point, so its Hodge vector is also $0$.

\end{proof}

\begin{dfn}
Whenever \eqref{main-local} in \cref{Cayley-formula} holds, we call $P_1 * \cdots * P_{k+1}$ a \emp{generalized Lawrence twist} of $\proj_U(P_{k+1})$.
\end{dfn}

\begin{rem}
We leave it as an exercise to check that if $\hat{P}$ is a generalized Lawrence twist of $P$, and $\hat{\hat{P}}$ is a generalized Lawrence twist of $\hat{P}$, then $\hat{\hat{P}}$ is a generalized Lawrence twist of $P$.\label{comp}
\end{rem}

Let us also remark that \cref{thm:main} is reminiscent of the following well-known projection formula for the mixed volume, see \cite[Theorem~5.3.1]{Sch93} which one can also prove quite directly using the BKK-theorem. 
\begin{lem}
    Let $P_1, \ldots, P_d$ be lattice polytopes in $\R^d$. If $P_1, \ldots, P_k$ (for $1 \le k\le d$) are contained in a $k$-dimensional rational subspace $U$ of $\R^d$, then 
    \[\MV(P_1, \ldots, P_d) = \MV(P_1, \ldots, P_k) \cdot \MV(\proj_U(P_{k+1}), \ldots, \proj_U(P_d)).\]
\end{lem}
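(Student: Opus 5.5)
Since the paper says this follows quite directly from BKK, I will count the solutions of a generic square system in the torus in two stages. After a lattice transformation of $\Z^d$, assume that $U$ is spanned by $e_1,\ldots,e_k$. Then the Laurent polynomials $f_1,\ldots,f_k$ with Newton polytopes $P_1,\ldots,P_k$ involve only $x'=(x_1,\ldots,x_k)$. Take $f_{k+1},\ldots,f_d$ generic with Newton polytopes $P_{k+1},\ldots,P_d$ in all variables $x=(x',z)$. By the BKK theorem, the system $f_1=\cdots=f_d=0$ in $(\C^*)^d$ has exactly $\MV(P_1,\ldots,P_d)$ solutions for generic coefficients. In the degenerate case $\MV(P_1,\ldots,P_k)=0$, the subsystem in $x'$ already has no solutions, so both sides vanish.

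\textbf{Fibration.} The solution set projects onto the solution set $\hat{Y}\subset(\C^*)^k$ of $f_1=\cdots=f_k=0$. By BKK applied in $(\C^*)^k$, $\hat{Y}$ is finite of size $\MV(P_1,\ldots,P_k)$. The fiber over $x'\in\hat{Y}$ is the solution set in $(\C^*)^{d-k}$ of $g_{i,x'}(z):=f_i(x',z)$ for $i=k+1,\ldots,d$. The Newton polytope of $g_{i,x'}$ is contained in $\proj_U(P_i)$, computed with respect to the lattice $\pi(\Z^d)\cong\Z^{d-k}$. For generic coefficients it equals $\proj_U(P_i)$: for each $w\in\Z^{d-k}$, the coefficient of $z^w$ is $\sum_{m\in P_i,\ \pi(m)=w}c_m x'^{m'}$, a nonzero linear form in the $c_m$, so it is nonzero for generic $c$, and only finitely many $x'$ need to be avoided.

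\textbf{Main obstacle.} The crux is to show that each fiber system is nondegenerate, so that BKK applies to it and gives exactly $\MV(\proj_U(P_{k+1}),\ldots,\proj_U(P_d))$ points. I would use the same argument as in the paper's footnote. Fix the finitely many points of $\hat{Y}$; these depend only on $f_1,\ldots,f_k$. For each $x'\in\hat{Y}$, the coefficients of the $g_{i,x'}$ are surjective linear images of the coefficient vector of $(f_{k+1},\ldots,f_d)$. Hence the condition that the fiber system is nondegenerate is a nonempty Zariski open condition in the coefficients of $f_{k+1},\ldots,f_d$. Intersecting these finitely many open sets with the open set on which the full system is BKK-generic gives a common generic choice.

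\textbf{Conclusion.} Summing the fiber counts over the $\MV(P_1,\ldots,P_k)$ points of $\hat{Y}$ yields
\[
\MV(P_1,\ldots,P_d)=\MV(P_1,\ldots,P_k)\cdot\MV(\proj_U(P_{k+1}),\ldots,\proj_U(P_d)).
\]
Both sides are multilinear and continuous, and we only need lattice polytopes, so no further extension argument is required. If one wants to avoid genericity subtleties entirely, an alternative is to cite \cite[Theorem~5.3.1]{Sch93} directly, with the volume normalization on $\R^d/U$ induced by $\pi(\Z^d)$.
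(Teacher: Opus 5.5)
Your proof is correct. It is the BKK argument the paper alludes to; the paper itself only cites \cite[Theorem~5.3.1]{Sch93}. Your solution set fibers over the finite set $\hat{Y}$ exactly as $Y$ is stratified into $\{x'\}\times Z_{x'}$ in the proof of \cref{Cayley-formula}, and your genericity argument is the one from the footnote there.

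One detail is worth making explicit: the order of choices. First pick the coefficients of $f_1,\ldots,f_k$ outside a proper Zariski closed set. This ensures $|\hat{Y}|=\MV(P_1,\ldots,P_k)$ and that the full system remains BKK-generic for a nonempty open set of coefficients of $f_{k+1},\ldots,f_d$. Only then is your finite intersection of nonempty open conditions on $f_{k+1},\ldots,f_d$ guaranteed to be nonempty.
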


\section{Lawrence twists}

\label{sec:lawrence}

\subsection{The original motivation}

Let us explain where the motivation for the notion of generalized Lawrence twists comes from. For this, let us go back to the situation of circuits. Recall that a tuple of integers $\gamma = (\gamma_1, \ldots, \gamma_{d+2})$ with $\gcd(\gamma)=1$ defines uniquely a lattice polytope  in $\R^d$ with $d+2$ vertices, a \emp{circuit}, with circuit relation $\gamma$. One way to describe such a circuit is to project the unimodular simplex $\Delta_{d+1}$ along the $1$-dimensional subspace $\R (\gamma_1 0 + \gamma_2 e_1 + \cdots + \gamma_{d+2} e_{d+1})$. 

Moreover, we can associate to it a pair of tuples of rational numbers $(\a_1, \ldots, \a_K)$ and $(\b_1, \ldots, \b_K)$ for some positive integer $K$ defined by
\[ \frac{\prod_{i:\gamma_i<0} (T^{-\gamma_i}-1)}{\prod_{i:\gamma_i>0} (T^{\gamma_i}-1)}  = \frac{\prod_{i=1}^K (T-e^{2 \pi i \a_i})}{\prod_{i=1}^K (T-e^{2 \pi i \b_i})}.    \] 
Recall that from this data Corti, Golyshev and Fedorov \cite{corti_hypergeometric_2011, fedorov_variations_2018} gave a formula for the $r$-th coefficient of the local $h^*$-polynomial of the circuit (for $1 \le r \le d$):
\[ \ell_r^* = \# \left\{ j \mid \# \left\{  \a_i \mid \a_i \leq \b_j \right\} - j + m_-  = r   \right\},  \]
where $m_- = \abs{ \set{ i \suchthat \g_i<0}}$. 

Now note the following. If we extend the tuple $(\gamma_1, \ldots, \gamma_{d+2})$ by adjoining to it a tuple of integers of the form $(y_1,-y_1, y_2,-y_2, \ldots, y_k, -y_k)$, then this does not affect the definition of alphas and betas.  In the above formula for $\ell^*_n$ it affects only $m_-$, therefore, the local $h^*$-polynomial of the  circuit extended this way is the local $h^*$-polynomial of the initial circuit multiplied with $t^k$. In other words, they have the same Hodge vector. As it will turn out, this phenomenon can be explained by viewing this construction as a special instance of a generalized Lawrence twist. 

\subsection{Lawrence twists via Gale duality}

Let us very quickly recall the basics of Gale duality. Let $A \subseteq \mathbb{Z}^d$ be an integral $d$-dimensional point configuration consisting of $n>d+1$ points. We assume that $A$ is \emp{spanning}, i.e., any point in $\Z^d$ is an affine integer combination of elements of $A$. Let $\bar{A}$ denote the corresponding homogeneous point configuration in $\Z^{d+1}$. There exists an integer $(n-d-1) \times n$-matrix $G$ such that $\bar{A} \cdot G^T=0$ and the columns of $G$ form a vector configuration that spans $\Z^{n-d-1}$. We note that the sum over the columns of $G$ is zero. We call $G$ a \emp{Gale transform} of $A$. For instance, in the case of the circuit above, we have $n=d+2$ and $G=\gamma$.
Conversely, every such integer $(n-d-1) \times n$-matrix $G$ whose columns span $\Z^{n-d-1}$ and sum up to zero arises in this way. 
Under these assumptions, we get a well-defined correspondence between unimodular equivalence classes of point configurations $A$ and unimodular equivalence classes of vector configurations $G$.

\smallskip

\begin{ex}
    Let $P$ be a $d$-dimensional lattice polytope $P \subset \R^d$. Let us assume that $P$ is \emp{spanning}, i.e., $P \cap \Z^d$ is a spanning point configuration, and $|P \cap \Z^d| > d+1$. Then we call the Gale transform of $P \cap \Z^d$ the \emp{Gale transform} of $P$. It is a challenging open problem to express the Hodge vector of $P$ using just its Gale transform in a way similar to the formula of Corti, Golyshev and Fedorov for circuits. 
\end{ex}

\begin{dfn}
 Let $A$ be a spanning point configuration satisfying $|A|\, > d+1$. We denote its Gale transform by $G_A$. Let $\tilde{A} \subset \Z^{d+2k}$ be the spanning point configuration with associated Gale transform $G_A \sqcup S_k$, where $S_k$ is a centrally symmetric configuration consisting of $ 2 k $ non-zero integer vectors.

  Then we call the convex hull of $
  \tilde{A}$ a \emp{Lawrence twist} of the convex hull of $A$.  
  It is a $(d+2k)$-dimensional spanning lattice polytope.

  In general, a spanning lattice polytope $\tilde{P}$ is a Lawrence twist of a spanning lattice polytope $P$ if one can choose $A$ and $\tilde{A}$ as described such that $\tilde{P} = \conv(\tilde{A})$ and $P = \conv(A)$. 
  \label{def-lawrence}
\end{dfn}

Now, our choice of terminology for a Lawrence twist should have become more clear. It has been motivated by the famous class of \emp{Lawrence polytopes} \cite{bayer_lawrence_1990}: those polytopes whose Gale transform is given by a centrally symmetric configuration of vectors. A Lawrence twist can also be seen as a variant of a \emp{Lawrence lift}, where one adds the negative of an existing vector in the Gale transform \cite[Section~5.5]{de_loera_triangulations_2010}. 

\subsection{Lawrence twists are generalized Lawrence twists}

Let us give the interpretation of Lawrence twists in the primal space. It suffices to consider the case $k=1$, where we extend the Gale transform by one pair of centrally-symmetric vectors.

\begin{prop}
Let $P \subset \R^d$ be a spanning lattice polytope, $a_1, \ldots, a_n$ a spanning point configuration consisting of some of the lattice points of $P$ including the vertices of $P$, and let $n > d+1$. Let $P_1 := \conv(0,e_{d+1})$, and $P_2$ be the convex hull of $(a_1, c_1), \ldots, (a_n, c_n) \in \Z^{d+1}$, where $c_1, \ldots, c_n \in \Z$, with $(c_1, \ldots, c_n) \not= (0,\ldots,0)$. Then $P_1*P_2$ is a Lawrence twist of $P$ with $k=1$. Moreover, any Lawrence twist of $P$ with $k=1$ is given in this way.
\label{prop:lawrence}
\end{prop}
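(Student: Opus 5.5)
The plan is to compute the Gale transform of the point configuration of $P_1*P_2$ directly from the affine dependences and check that it equals $G_A$ with one centrally symmetric pair $\pm w$ adjoined. The converse then follows from the correspondence between point configurations and Gale transforms.

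\textbf{The configuration.} Use the embedding \eqref{cayley-second}. The relevant point configuration $\tilde A \subset \Z^{d+2}$ consists of the $n+2$ points
\[ (0,0),\quad (e_{d+1},0),\quad ((a_i,c_i),1)\ \ (i=1,\ldots,n). \]
Its convex hull is $P_1*P_2$. It is spanning: $A$ spans $\Z^d$, the difference of the first two points gives $e_{d+1}$, and the points with last coordinate $1$ supply the Cayley direction. The cardinality condition also holds, since $n+2>(d+2)+1$ because $n>d+1$.

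\textbf{Affine dependences.} Write a dependence $\lambda (0,0)+\lambda'(e_{d+1},0)+\sum\mu_i((a_i,c_i),1)=0$ with $\lambda+\lambda'+\sum\mu_i=0$. Reading off coordinates:
\begin{itemize}
\item the last coordinate gives $\sum\mu_i=0$, hence $\lambda=-\lambda'$;
\item the first $d$ coordinates give $\sum\mu_i a_i=0$, so $\mu$ is an integral affine dependence of $A$;
\item coordinate $d+1$ gives $\lambda'=-\sum\mu_i c_i$.
\end{itemize}
So the lattice of integral affine dependences of $\tilde A$ maps isomorphically onto that of $A$, via $\mu \mapsto (\sum \mu_i c_i,\,-\sum\mu_i c_i,\,\mu)$. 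Taking the rows of $G_A$ as a basis of the dependences of $A$, the Gale transform of $\tilde A$ is $G_A$ with two extra columns $w$ and $-w$, where $w:=G_A c=\sum c_i g_i$ and $g_i$ are the columns of $G_A$. The columns still span $\Z^{n-d-1}$ and sum to zero. Hence $G_{\tilde A}=G_A\sqcup\{w,-w\}$, which by Definition~\ref{def-lawrence} makes $P_1*P_2$ a Lawrence twist of $P$ with $k=1$.

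\textbf{Expected obstacle: nondegeneracy of $w$.} Definition~\ref{def-lawrence} requires $w\neq 0$. Now $G_Ac=0$ exactly when $c$ lies in the row space of $\bar A$, that is, when $c_i=\langle \ell,a_i\rangle+b$ is an affine function on $A$. In that case the shear $(x,t)\mapsto(x,t-\langle\ell,x\rangle-b)$ is a lattice automorphism that turns $c$ into $0$. So the hypothesis $(c_1,\ldots,c_n)\neq 0$ has to be read up to such shears. I will state explicitly that $c$ is taken to be non-affine on $A$, equivalently nonzero after normalization, and that this is what guarantees $w\neq0$.

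\textbf{Converse.} Let a Lawrence twist of $P$ with $k=1$ be given. Its Gale transform is $G_A\sqcup\{w,-w\}$ for some nonzero $w\in\Z^{n-d-1}$. Since the columns of $G_A$ span $\Z^{n-d-1}$, we can write $w=G_Ac$ for some integer vector $c$, which is not affine on $A$ because $w\neq0$. The construction above with this $c$ produces a configuration with the same Gale transform. By the bijection between unimodular equivalence classes of spanning point configurations and of Gale transforms, the given twist is unimodularly equivalent to $P_1*P_2$. Reordering the columns, with $w$ and $-w$ placed first, is harmless since Gale duality respects labelings.
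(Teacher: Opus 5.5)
Your proof is correct and follows the paper's route: you identify the Gale transform of the Cayley configuration as $G_A$ with the pair $\pm G_A c$ appended, and you get the converse by writing a given nonzero $w$ as $G_A c$, using that the columns of $G_A$ span $\Z^{n-d-1}$; you also carry out the dependence computation that the paper leaves as ``directly checked''. Your caveat is correct and needed, because the paper's hypothesis $(c_1,\ldots,c_n)\neq 0$ is too weak: if $c$ is affine on $A$, then $G_A c=0$ and $P_1*P_2$ is a double lattice pyramid over $P$ (e.g.\ $P=[0,2]$, $A=\{0,1,2\}$, $c=(0,1,2)$ gives vanishing Hodge vector although $P$ has Hodge vector $(1)$), so the right condition is that $c$ is not affine on $A$, equivalently $\dim P_2=d+1$.
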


\begin{proof}
Denote the columns of $G_A$ by $b_1, \ldots, b_n \in \Z^{n-d-1}$. Choose an integer non-zero vector $v \in \Z^{n - d - 1}$. By our spanning assumption, 
there exist $c_1,\ldots,c_n \in \Z$ such that $c_1 b_1 + \cdots + c_n b_n = v$. 
Now, it can be directly checked that the point configuration associated to $G_A\cup \{v, -v\}$ is given by the columns of  
    \[ \tilde{A}  = \begin{pmatrix}
         a_{11} & \ldots& a_{n1} & 0 & 0 \\
         a_{12}& \ldots &a_{n2} & 0 & 0 \\
         \vdots &\ldots &\vdots & 0 & 0  \\ 
         a_{1d} &\ldots& a_{nd} & 0 & 0 \\
         c_1 & \ldots & c_n & 0 & 1 \\ 
         0 & \ldots & 0 & 1 & 1 \\ 
     \end{pmatrix}. \] 
Their convex hull is precisely the Cayley polytope $P_1 * P_2$. From this, the statements follow.
\end{proof}

We observe from \cref{Cayley-formula}(1) that this is just a special case of a generalized Lawrence twist. Hence, \cref{comp} implies directly a result proven by the first author in his thesis \cite{vadym_thesis}.

\begin{cor} \label{lawrence_twist}
Lawrence twists preserve Hodge vectors.
\end{cor}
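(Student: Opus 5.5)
The plan is to reduce to the case $k=1$ and then iterate. By \cref{def-lawrence}, a Lawrence twist with general $k$ extends the Gale transform $G_A$ by a centrally symmetric configuration $S_k=\{\pm v_1,\ldots,\pm v_k\}$. Adding the pairs $\pm v_1$, then $\pm v_2$, and so on, one at a time, produces a chain of spanning configurations $A=A_0,A_1,\ldots,A_k=\tilde A$. In this chain $G_{A_{j}}=G_{A_{j-1}}\sqcup\{v_j,-v_j\}$, so each $\conv(A_j)$ is a Lawrence twist of $\conv(A_{j-1})$ with $k=1$. Suppose each such step is a generalized Lawrence twist. Then \cref{comp}, applied repeatedly, shows that $\tilde P$ is a generalized Lawrence twist of $P$. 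Equation \eqref{main-local} will give $\ell^*(\tilde P;u,v)=V\,(uv)^k\,\ell^*(P;u,v)$, and we will check that $V=1$. Multiplying by a monomial $(uv)^k$ only shifts the $\ell^*$-vector by $k$ zeroes on the left; on the right the padding is automatic because $\dim \tilde P=d+2k$. Hence the Hodge vectors agree.

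For the $k=1$ step we use \cref{prop:lawrence}, which writes the twist as $P_1*P_2$ with $P_1=\conv(0,e_{d+1})$ and $P_2=\conv((a_i,c_i))\subset\R^{d+1}$. We then apply \cref{Cayley-formula} with $k=1$, ambient dimension $d+1$, and $U=\R e_{d+1}$, which is a $1$-dimensional rational subspace containing $P_1$.

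The hypotheses of \cref{Cayley-formula} are checked as follows.
\begin{itemize}
\item We have $\dim P_1=1\le k$.
\item $V=\MV(P_1)$ is the lattice length of a primitive segment, so $V=1$.
\item $\proj_U(P_2)=P$ with respect to the lattice $\pi(\Z^{d+1})=\Z^d$.
\end{itemize}

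To obtain \eqref{main-local} we must verify situation (1). The only index set to check is $I=\{2\}$, for which $|I|+(d+1-d_I)=1+(d+1-\dim P_2)$. If $\dim P_2=d+1$, this equals $1\le k$, and situation (1) holds; this is the ``for instance'' clause of the theorem.

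The step I expect to be the main obstacle is the case where $P_2$ is not full-dimensional. This happens when $(c_i)$ is an affine function of the $a_i$. I would rule it out via Gale duality: in that case $v=\sum c_ib_i=0$, because every affine function of $A$ lies in the row space of $\bar A$, which annihilates $G_A^T$. This contradicts the requirement that the vectors of $S_k$ are nonzero.

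I also need the sum $P_1+P_2$ to be $(d+1)$-dimensional and $d+1\ge2$; both follow at once. Finally, the spanning hypothesis is preserved at each step of the chain, which is needed to keep invoking \cref{prop:lawrence}.
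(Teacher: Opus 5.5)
Your proposal is correct and follows the same route as the paper. You write the $k=1$ twist as $P_1*P_2$ via \cref{prop:lawrence}, apply \cref{Cayley-formula}(1) with $V=1$ and $\proj_U(P_2)=P$, and then iterate over the $k$ pairs, where chaining the identities $\ell^*(\conv(A_j);u,v)=uv\,\ell^*(\conv(A_{j-1});u,v)$ is all that \cref{comp} is needed for. Your Gale-duality argument that $v\neq 0$ forces $\dim P_2=d+1$ makes explicit a step the paper leaves implicit, since its stated condition $(c_1,\ldots,c_n)\neq 0$ would not by itself guarantee full-dimensionality.
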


\subsection{A direct relation between \texorpdfstring{$h^*$}{h*}-polynomials of lattice projections and Cayley polytopes}

\cref{Cayley-formula}(1) gives an explicit and easy description of the bivariate $h^*$-polynomial of a Lawrence twist (with $k=1$) just using the bivariate $h^*$-polynomials of the projection and the polytope itself. Turning this around, this could also be seen as an explicit formula for the $h^*$-polynomial of a projection.

\begin{cor} Let $P \subseteq \R^d$ be a $d$-dimensional lattice polytope and let $I \subset \R^d$ be a lattice interval of normalized volume $1$. Then we have
\[ h^*( P * I; u, v)= u v \cdot h^*( \proj_I P; u,v) + h^*(P;u,v). \] 
In particular, 
\[ h^*( P * I, t) = t \cdot h^*( \proj_I P, t) + h^*(P,t).\] 
This implies
\[\deg(P * I) \ge \deg(\proj_I P) + 1.\]
\end{cor}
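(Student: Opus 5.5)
The plan is to specialize \cref{Cayley-formula} to $k=1$, with $P_1 := I$ and $P_2 := P$, and then read off the classical statements by setting $v=1$.

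\textbf{Checking the hypotheses.} Here $\dim(P_1)=1\le k$ and $\dim(P_1+P_2)=d$. We take $U$ to be the line spanned by $I$, which is rational. The mixed volume of a single lattice segment in its own one-dimensional lattice $U\cap\Z^d$ is its lattice length, so $V=\MV(I)=1$ because $I$ has normalized volume $1$.

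\textbf{Evaluating the sum in \eqref{main-eq}.} The index set consists of the $I'\subsetneq[2]$ with $2\in I'$, which forces $I'=\{2\}$. For this index we have $P^{\{2\}}=P$, $d_{\{2\}}=d$ and $|I'|=1$. The sign is therefore $(-1)^{1-1}=1$ and the factor is $(1-uv)^0=1$. Formula \eqref{main-eq} then gives
\[ h^*(I*P;u,v)=uv\, h^*(\proj_I P;u,v)+h^*(P;u,v). \]
We have $I*P\cong P*I$ by permuting the coordinates $e_1,e_2$ of the Cayley construction. One more point needs care: \cref{Cayley-formula} asks for $d\ge 2$. If $d=1$, then $\proj_I P$ is a point, and the identity can be verified directly, since $P*I$ is a lattice polygon and Pick-type counting applies. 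Alternatively, the proof of \cref{Cayley-formula} goes through verbatim for $d=1$.

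\textbf{Univariate statements.} Setting $v=1$ and using $h^*(Q;t,1)=h^*_Q(t)$ gives
\[ h^*(P*I,t)=t\,h^*(\proj_I P,t)+h^*(P,t). \]
All three polynomials have nonnegative coefficients. Hence the coefficient of $t^{\deg(\proj_I P)+1}$ on the right-hand side is at least the leading coefficient of $h^*(\proj_I P,t)$, which is positive, and no cancellation can occur. This yields
\[ \deg(P*I)\ge \deg(\proj_I P)+1. \]

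The only real obstacle is bookkeeping: identifying $V=1$ correctly with respect to the lattice $U\cap\Z^d$, and handling the boundary case $d=1$.
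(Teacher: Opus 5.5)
Your proof is correct and follows the paper's route: specialize \cref{Cayley-formula} to $k=1$ with $V=\MV(I)=1$, note that only the index $\{2\}$ survives in the sum and contributes $h^*(P;u,v)$, then set $v=1$ and use nonnegativity of the coefficients for the degree bound. One small precision: take $U$ to be the line through the origin parallel to $I$, rather than the linear span of $I$; translating $I$ by a lattice vector changes $P*I$ only by a unimodular shear.
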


\cref{prop:lawrence} implies the following observation.

\begin{cor}
Lawrence twists with $S_k$ increase the degree at least by $k$.
\label{cor-degree}
\end{cor}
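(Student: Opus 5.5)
The plan is to use \cref{prop:lawrence}, which shows that a Lawrence twist with $k=1$ is a Cayley polytope $P_1 * P_2$. Here $P_1=\conv(0,e_{d+1})$ is a primitive lattice interval, and $P_2 \subset \R^{d+1}$ is a lift of $P$ that projects onto $P$ along $U:=\R e_{d+1}$. For general $k$, I will not work with $S_k$ directly. I will realize adding $S_k$ as $k$ successive additions of a single centrally symmetric pair $\{v,-v\}$ and do induction on $k$. The Gale transform obtained after $j$ steps is $G_A \sqcup S_j$, with $S_j$ the first $j$ pairs, and it corresponds to a spanning configuration $A_j$. Its size exceeds its dimension plus one, so \cref{prop:lawrence} applies at each step. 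Since the degree does not depend on the choice of point configuration, it is enough to show that one step, i.e.\ $k=1$, raises the degree by at least $1$.

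For $k=1$ I apply the preceding corollary with the roles of its polytopes as follows. Its $d$-dimensional polytope is $P_2$, which is full-dimensional in $\R^{d+1}$ because the $c_i$ are not all zero and the configuration spans (if it were not full-dimensional I would first pass to the affine hull). Its interval is $I=P_1$, which has normalized volume $1$. The corollary then gives $h^*(P_1*P_2,t)=t\,h^*(\proj_I P_2,t)+h^*(P_2,t)$, and since $\proj_I P_2 = P$ this yields $\deg(P_1*P_2)\ge \deg P+1$. Here I must check that the corollary's $P * I$ agrees with $P_1*P_2$ up to the order of the factors. The Cayley construction is symmetric under permuting factors up to unimodular equivalence, so this holds.

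The main obstacle is ensuring that no cancellation occurs, so that the top coefficient of $t\,h^*(P,t)$ genuinely survives. This is immediate, because both summands have nonnegative coefficients. The leading coefficient of $h^*(P,t)$ is positive, so the coefficient of $t^{\deg P+1}$ on the right-hand side is positive.

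Iterating $k$ times, each Lawrence twist with one pair adds at least $1$ to the degree, which completes the induction. The only remaining check is that the sequential description is legitimate. Adding pairs one at a time yields the same configuration $\tilde A$ up to unimodular equivalence, by the Gale correspondence recalled above, since the final Gale transform is $G_A\sqcup S_k$ in either case.
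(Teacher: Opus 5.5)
Your route is the paper's: realize a one-pair Lawrence twist as $P_1 * P_2$ via \cref{prop:lawrence}, apply the corollary $h^*(P*I,t)=t\,h^*(\proj_I P,t)+h^*(P,t)$ with $P_2$ in the role of $P$ and $I=P_1$, and add the pairs of $S_k$ one at a time. The nonnegativity argument, the symmetry of the Cayley construction and the induction bookkeeping are fine. There is, however, a gap at the one step that actually forces the degree up: the full-dimensionality of $P_2$.

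Full-dimensionality of $P_2$ does not follow from $(c_1,\ldots,c_n)\neq 0$ together with spanning. For example, if $c_i=(a_i)_1$ for all $i$, every lifted point lies on the hyperplane $x_{d+1}=x_1$. Your fallback of passing to the affine hull does not work either, because $P_1$ is transverse to that hyperplane. More generally, whenever $c$ is an affine function of the $a_i$ (automatically integral, since $A$ is spanning), a unimodular shear gives $P_1*P_2\cong P\circ[0,1]$. Its $h^*$-polynomial equals $h^*(P,t)$, so in that case the degree does not increase at all.

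What excludes this case is the nonvanishing of the added Gale vector, not $c\neq 0$. In the proof of \cref{prop:lawrence} the lift is chosen with $\sum_i c_i b_i=v\neq 0$, i.e.\ $c\notin\ker G_A$. Since $\bar A\,G_A^T=0$, $\bar A$ has rank $d+1$ and $G_A$ has rank $n-d-1$, the kernel of $G_A$ over $\Q$ is exactly the row space of $\bar A$. That row space consists of the value vectors of affine functions on $A$. Hence $c$ is not affine in the $a_i$, the points $(a_i,c_i)$ affinely span $\R^{d+1}$, and the corollary applies. With this justification replacing yours, and the fallback dropped, the argument is complete.
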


\begin{rem}
In the bachelor thesis of Halit Dur \cite{Halit_Dur} the above formula was discussed in the special case when $\proj_I P$ is a face of $P$ and $I$ is the normal vector of this face. Moreover, it was noted that this way one can construct a sequence of polytopes $P_0,P_1,P_2, \ldots$ whose $h^*$-polynomials behave similarly to the Fibonacci polynomials, albeit with the non-standard starting values $h^*(P_0)=h^*(P_1)=1$. 
\end{rem}

\section{Applications}
\label{sec:applications}

\subsection{Infinitely many non-free-joins lattice polytopes with same Hodge vector}

This follows from the following result, whose proof
we leave to the reader. A detailed argument using Gale duality can be found in the thesis of the first author \cite[Lemma 13]{vadym_thesis}.

\begin{lem}
 Let $P$ be a spanning lattice polytope of dimension $d$. For each $k \in \Z_{\ge 1}$ there are infinitely many non-isomorphic Lawrence twists of $P$ of dimension $d+2k$ that are not free joins. \label{lem-lawrence}
\end{lem}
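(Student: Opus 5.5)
By \cref{comp}, a composition of Lawrence twists with $k=1$ is again a Lawrence twist. I therefore plan to build the $(d+2k)$-dimensional examples by first producing one suitable $(d+2k-2)$-dimensional Lawrence twist of $P$ (for $k=1$ this is $P$ itself), and then producing infinitely many non-isomorphic, non-free-join twists with $k=1$ of it. In the Gale picture this amounts to choosing the centrally symmetric set $S_k$ as $S_{k-1}\sqcup\{v,-v\}$. The free parameter is the final vector $v\in\Z^{n'-d'-1}$, where $n'$ is the number of points of the previous configuration; I will let $v$ grow, say $v=N w$ or $v=w+Nb$ for a fixed column $b$ and $N\to\infty$.

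In the primal description of \cref{prop:lawrence}, the twist is $P_1*P_2$ with $P_1=\conv(0,e_{d+1})$ and $P_2=\conv((a_i,c_i))$. The lift $(c_i)$ is determined by $v$ up to adding affine functions of the $a_i$, since $\sum c_ib_i=v$.

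To show that infinitely many of these twists are non-isomorphic, I will use a lattice invariant that grows with $N$. Normalized volume is the natural candidate. By the preceding corollary, $h^*(P_1*P_2,t)=t\,h^*(\proj P_2,t)+h^*(P_2,t)$, so the volume equals $\mathrm{Vol}(P)+\mathrm{Vol}(P_2)$, the latter computed in its affine hull of dimension $d+1$. The polytope $P_2$ is the convex hull of the lifted points $(a_i,c_i)$. Choosing the $c_i$ so that the lift is far from affine (for instance $c_i=N\lambda_i$ with $\lambda$ not affine on the $a_i$, which is possible because $v\neq 0$), its volume grows linearly in $N$. Hence infinitely many volumes occur, and so infinitely many isomorphism classes. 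One has to check that the resulting configuration stays spanning and that its convex hull is the whole twist; both follow from the explicit matrix in the proof of \cref{prop:lawrence}.

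To show that the twists are not free joins, I will use a Gale-dual criterion. If $Q=Q'\circ Q''$ is a free join, its lattice points split into two parts lying in complementary affine spaces, and the Gale transform of the full lattice-point configuration decomposes as a direct sum of the Gale transforms of the two pieces. I would rather work with a coarser invariant: a free join has a vertex set partitioned into two parts whose affine spans are disjoint and skew, with the dimension adding up. In $P_1*P_2$ the vertices are $(0,1)$, $(e_{d+1},1)$ and the vertices of $P_2\times\{0\}$. I would show that for generic large $N$ no such partition exists. The height function $c$ is "generic", so no subset of the vertices of $P_2$ lies in a proper affine subspace complementary to the rest. The two apex points also cannot be separated, because $P_1$ is parallel to the lifting direction in which $P_2$ is full-dimensional.

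This non-free-join verification is the step I expect to be the main obstacle. Free-join decompositions need not respect the Cayley structure, so the argument has to be made carefully. My first attempt will be the Gale-dual formulation: $Q$ is a free join iff its Gale transform (of all lattice points) splits as a direct sum $G'\oplus G''$ of two vector configurations in complementary sublattices, each summing to zero. I would then choose $v$ not lying in any proper sum of coordinate-type sublattices spanned by subsets of the columns of $G_A\sqcup S_{k-1}$. This is possible for infinitely many $v$, since there are only finitely many such sublattices and each is proper.
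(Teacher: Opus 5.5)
\textbf{Gap: the non-free-join step.} Your non-isomorphism argument is fine. With $c=N\lambda$, the polytope $P_2$ is the image of $\conv((a_i,\lambda_i))$ under $(x,y)\mapsto(x,Ny)$, so the normalized volume of $P_1*P_2$ is $\mathrm{Vol}(P)+N\,\mathrm{Vol}(\conv((a_i,\lambda_i)))$, which grows with $N$.

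The problem is that you never say which configuration $A$ you use. Your claim that every relevant sublattice is proper is false whenever $G_A$ has a zero column, and so is the heuristic that a ``generic'' height function rules out every splitting. A zero column is a point of $A$ lying in no affine dependence. For spanning $A$ this happens exactly when $P$ is a lattice pyramid with an apex that occurs only once in $A$. That zero column survives in $G_A\sqcup S_{k-1}\sqcup\{v,-v\}$ for every $v$, so every twist you build is again a lattice pyramid, however large or generic $v$ and $c$ are. Concretely, in the splitting $\{\text{other columns}\}\sqcup\{0\}$ the summand containing $v$ is all of $\Z^r$.

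This is exactly what the paper warns about in the remark after the lemma, and it is not a marginal case. The lemma is applied to lattice pyramids in the corollary on thin polytopes. Also, for a unimodular simplex you cannot even get $|A|>d+1$ without repeating points.

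\textbf{How to repair it.} Take $A$ to be $P\cap\Z^d$ with every vertex doubled. Then $|A|>d+1$, and no column of $G_A$ is zero: a coloop would have to be the apex of a pyramid, hence a vertex, but vertices occur twice. This persists for $G_A\sqcup S_{k-1}$, so your induction on $k$ goes through.

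Next, work with the Gale transform of $\tilde A$ itself, not of all lattice points of $\conv(\tilde A)$; your condition on $v$ is phrased in terms of $G_A\sqcup S_{k-1}$ anyway. You only need the implication ``free join $\Rightarrow$ splitting'', and it holds for $\tilde A$. Every lattice point of $Q'\circ Q''$ lies in $Q'$ or in $Q''$, so the affine dependences of $\tilde A$ split into two nonempty blocks whose spans satisfy $L'\oplus L''=\Z^r$.

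In such a splitting of $G_A\sqcup\{v,-v\}$, the vectors $v$ and $-v$ lie in the same block, since otherwise $v\in L'\cap L''=0$. Deleting them leaves a splitting of $G_A$ into two nonempty blocks with $v$ in one summand. With no zero columns, both summands are nonzero and hence have rank $<r$. So it suffices to choose $w$ outside finitely many proper linear subspaces and set $v=Nw$. This works for all $N$ at once and is compatible with your volume growth.

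It is this rank drop that makes the choice possible. Finitely many proper sublattices of finite index can cover $\Z^r$; for example, three index-$2$ sublattices cover $\Z^2$.
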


\begin{rem}
Note that for the proof one really has to work on the level of point configurations and not only on the level of polytopes. For instance, if $P$ is a lattice pyramid with apex $v$ and one takes in \cref{def-lawrence} as $A$ simply $P \cap \Z^d$, then its Lawrence twist is also a lattice pyramid, so a free join. One needs to take as $A$ the point configuration $P \cap \Z^d$ with $v$ as a double point in order for the Lawrence twist to be not necessarily a lattice pyramid anymore. 
\end{rem}

\subsection{Many more thin polytopes}

Recall that lattice polytopes with vanishing Hodge vectors are called thin. So far, two ways were known to get high-dimensional thin polytopes: either by taking the free join with a thin polytope or by just being trivially thin. We note that trivially thin polytopes are abundant as one can simply take {\em any} Cayley polytope of at least $d+1$ lattice polytopes in $\R^d$ to get one. Hence, the following problem was posed in \cite{borger_thin_2023}.

\begin{question}\label{bkn_question}
    Suppose $d$-dimensional $P$ is spanning and thin. Is it true that if $P$ is not a free join, then $P$ is trivially thin?
    \label{question-thin}
\end{question}

This holds for $d \le 3$ \cite{borger_thin_2023}. With the help of Lawrence twists we can also easily answer\footnote{After this question was resolved in the thesis of the first author, an independent answer was also given by Selyanin in the preprint \cite{selyanin2025newtonnumbersvanishingpolytopes}.} it in dimensions $d \ge 5$.

\begin{cor}
The answer to \cref{bkn_question} is negative for any $d \ge 5$. In fact, there are infinitely-many non-isomorphic counter-examples in each dimension $d \ge 5$. 
\end{cor}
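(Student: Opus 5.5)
The plan is to start from a thin, spanning polytope that is not trivially thin in low dimension, and apply Lawrence twists to it. The dimension goes up by $2k$ and thinness is kept, by \cref{lawrence_twist}. Lawrence twists also raise the degree, by \cref{cor-degree}, which should keep the result from being trivially thin. Finally, \cref{lem-lawrence} supplies infinitely many non-isomorphic twists that are not free joins.

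\textbf{Base cases.} I need a seed of dimension $3$ (for odd target dimensions) and one of dimension $4$ (for even ones). Each seed should be spanning and thin, with degree large enough that $\dim < 2\deg$; it need not fail to be a free join itself.

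For $d=3$, a natural seed is a thin $3$-dimensional polytope of degree $2$. For example, take an empty-interior polytope such as $\Delta_1\circ \Delta'$ with $\Delta'$ a lattice polygon of degree $2$ without interior points; here I use that a pyramid is thin by \eqref{invformula}. Lattice pyramids of degree $2$ over spanning polygons are concrete and clearly spanning.

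For $d=4$, I take a lattice pyramid over such a $3$-polytope of degree $2$. That gives dimension $4$ and degree $2$, but then $\dim = 2\deg$, which is trivially thin. This is fine, because the twist will raise the degree.

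\textbf{Twisting step.} Apply a Lawrence twist with $k \ge 1$ to the seed $P$ of dimension $d_0\in\{3,4\}$ and degree $\delta\ge 2$. The result has dimension $d_0+2k$, is thin by \cref{lawrence_twist}, and has degree at least $\delta+k$ by \cref{cor-degree}. It fails to be trivially thin exactly when
\[
d_0+2k<2(\delta+k), \quad\text{that is,}\quad d_0<2\delta .
\]
This holds for $d_0=3$, and it also holds for $d_0=4$ because $\delta=2$ satisfies $4 = 2\delta$ only with equality. In the equality case I must make sure the twist raises the degree strictly beyond $\delta+k$, or else choose a seed of degree $3$ in dimension $4$ (for instance a pyramid over a thin $3$-polytope of degree $3$, if available). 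Spanningness of the twisted polytope is part of \cref{def-lawrence}. By \cref{lem-lawrence}, for each $k$ infinitely many of these twists are pairwise non-isomorphic and not free joins. Together, $d_0\in\{3,4\}$ and $k\ge1$ cover all $d\ge5$.

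\textbf{Main obstacle.} The delicate point is the even case: I need a $4$-dimensional thin spanning seed with degree at least $3$, or a twist that provably raises the degree by more than $k$. I would first try a lattice pyramid over a degree-$3$ spanning $3$-polytope. Such a pyramid is thin by \eqref{invformula} and keeps degree $3$, so $4<6$. A $3$-polytope of degree $3$ with interior points works for this, since the pyramid makes it thin regardless. This same trick can replace the odd seed too: take a pyramid over any $2$-dimensional spanning polygon with an interior point, which has degree $2$ and dimension $3<4$.
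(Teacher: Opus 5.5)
Your final version is the paper's proof. You seed with a lattice pyramid over a spanning polygon with an interior lattice point (dimension $3$, degree $2$) and a lattice pyramid over a spanning $3$-polytope with an interior lattice point such as $[0,2]^3$ (dimension $4$, degree $3$), then apply \cref{lawrence_twist}, \cref{cor-degree} and \cref{lem-lawrence}. Drop the earlier seeds, which do not work: a lattice polygon without interior points has degree at most $1$, $\Delta_1\circ\Delta'$ is $4$-dimensional rather than $3$-dimensional, and for $d_0=4$, $\delta=2$ the required strict inequality $d_0<2\delta$ fails.
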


\begin{proof}

Consider the lattice pyramid $P$ over a two-dimensional lattice polygon with an interior lattice point. So,  $\dim(P)=3$, $\deg(P)=2$, and $P$ is thin. Choose any $k \in \Z_{\ge1}$. Applying \cref{lem-lawrence} yields infinitely many non-isomorphic Lawrence twists of $P$ in dimension $3+2k$ that are not free joins. As by \cref{cor-degree} they have degree at least $2+k$, they are not trivially thin. This proves the statement for $d \ge 5$ odd. 

 The result in an even dimension $d \ge 6$ follows in the same way starting from a lattice pyramid over a three-dimensional lattice polytope with an interior lattice point.
\end{proof}

We remark that we do not know the answer to \cref{question-thin} in dimension $d=4$. It is affirmative in the case of $4$-dimensional thin simplices \cite{kurylenko_thin_2024}.

\subsection{An example of a family of generalized Lawrence twists}

Of course, taking generalized Lawrence twists using \cref{Cayley-formula} allows even more freedom in getting parametrized families of high-dimensional thin polytopes by just starting with a thin polytope. As an application, let us consider thin simplices. In \cite{kurylenko_thin_2024} a complete classification of all $4$-dimensional thin simplices was given. There are 6 sporadic examples and just one infinite family parametrized by a discrete parameter $N \in 2 \mathbb{N}$. Now we can show that this family arises exactly from the construction described in \cref{thm:main-local}. Taking the vertices of the family from \cite{kurylenko_thin_2024} and performing a simple unimodular transformation one obtains thin $4$-dimensional simplices $S^{(N)}$
\[ \begin{pmatrix}
    v_0 & v_1 & v_2 & v_3 & v_4\\
\end{pmatrix} = \begin{pmatrix}
    N/2 & 0 & 0 & 0 & -1 &  \\
    0 & 0 &  1 & -1 & 1 &  \\
     0 & 0  & 0 & 0 & 2 &\\
    0 & 0 & 1 & 1 & 1 & 
\end{pmatrix}\] 
By taking $d=3$, $k=1$, and two lattice polytopes $P_1 = [0, N/2] \times \{0\} \times \{0\}$ and $P_2 = \conv((0,1,0),(0,-1,0),(-1,1,2))$, we see that $S^{(N)} \cong P_1 * P_2$. Note that $P_2$ projects along $U:= \R \times \{0\} \times \{0\}$ isomorphically onto $\proj_U(P_2) = \conv((1,0),(-1,0),(1,2))$. As the latter polygon is isomorphic to $2 \Delta_2$, we see that $P_2$ and $\proj_U(P_2)$ are both thin.  Hence, we are in the situation (1) of \cref{Cayley-formula} (where one just has to check for $I=\{2\}$), and $S^{(N)}$ is a generalized Lawrence twist of a thin polygon.

\subsection{An alternative proof of the thinness of \texorpdfstring{$B_k$}{Bk}-polytopes}

In \cite{selyanin2025newtonnumbersvanishingpolytopes} Selyanin defines in the context of Arnold's monotonicity theorem the following notion:

\begin{dfn}
A \emp{$B_k$-polytope} is the Cayley polytope $P_0 * \cdots * P_k$ for lattice polytopes in $\R^{n-k}$, where $\dim(P_1 + \cdots + P_k) < k$ and $\dim(P_0)=n-k$.
\end{dfn}

Selyanin showed in \cite[Corollary~1.7]{selyanin2025newtonnumbersvanishingpolytopes} that $B_k$-polytopes are thin. He used the theory of $\ell$-Newton numbers to prove this and asked 
in \cite[Question~1.10]{selyanin2025newtonnumbersvanishingpolytopes} whether there is a simpler way to verify that $B_k$-polytopes are thin. And indeed there is. Note that we are precisely in the situation (1) of \cref{Cayley-formula} (where his first polytope $P_0$ corresponds to our last polytope $P_{k+1}$). 
Hence, $B_k$-polytopes are generalized Lawrence twists. Now, if $k \le n-k$ one just has to apply Bernstein's criterion \cite{bernstein_number_1975} to $P_1, \ldots, P_k$ with $\dim(P_1 + \cdots + P_k) < k$ to get for the mixed volume $V=0$, so the $\ell^*$-polynomial of $P_0 * \cdots * P_{k+1}$ vanishes by \eqref{main-local}. Otherwise, if $k > n-k$, $V=0$ by the convention of \cref{Cayley-formula}, and we get again thinness by \eqref{main-local}. Moreover, $k \ge n-k$ actually implies that $P_0 * \cdots * P_k$ is even trivially thin (e.g., \cref{Cayley-formula}(2)).

\subsection{Infinitely many nearly thin polytopes}

Coming back to free joins the one construction previously known to get high-dimensional lattice polytopes with the same Hodge vector was to take the free join with a lattice polytope with Hodge vector $(1)$. This motivates the following definition:

\begin{dfn}
    A lattice polytope with Hodge vector $(1)$ is called \emp{nearly thin}. 
\end{dfn}

Note that nearly thin polytopes are necessarily odd-dimensional because of the palindromicity of the local $h^*$-polynomial. 

The standard example of a nearly thin lattice polytope is the interval $[0,2]$. More generally, for $d$ odd, $2 \Delta_d$ is nearly thin. What about other examples? Now, using generalized Lawrence twists the following result is immediate.

\begin{cor}
    In each odd dimension $d\geq 3$ there are infinitely many non-isomorphic nearly thin polytopes that are not free joins.
\end{cor}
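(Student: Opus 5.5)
The plan is to apply \cref{Cayley-formula}(1) with $k=1$, or more generally with $k\ge 1$, starting from a nearly thin polytope of smaller odd dimension. Then we use \cref{lem-lawrence}, or a direct construction, to produce infinitely many non-isomorphic examples that are not free joins.

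\emph{Base case $d=3$.} Start from $Q=[0,2]\subset\R$, which has Hodge vector $(1)$. Let $P_1:=\conv(0,e_2)\subset\R^2$, so that $U=\R e_2$ and $\MV(P_1)=1$. For the last polytope take any full-dimensional lattice polygon $P_2\subset\R^2$ whose projection along $U$ onto the first coordinate is $[0,2]$ with respect to the lattice $\Z$. Concretely, let $P_2=\conv((0,0),(2,0),(1,N))$ or $\conv((0,0),(2,c),(1,N))$ with $N\ge 1$. Since $\dim P_2=2=d$, situation (1) of \cref{Cayley-formula} applies, and \eqref{main-local} gives $\ell^*(P_1*P_2;u,v)=uv\,\ell^*([0,2];u,v)$. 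Hence $P_1*P_2$ is a $3$-dimensional nearly thin polytope.

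\emph{Higher odd dimensions.} For $d=2m+1$, either iterate the construction, which is justified by \cref{comp}, or take $k=m$ directly. In the latter case choose $P_1=\cdots=P_m=\conv(0,e_2,\dots,e_{m+1})$ in $U=\{0\}\times\R^m$, which have mixed volume $1$. As last polytope choose any full-dimensional $\tilde P\subset\R^{m+1}$ projecting onto $[0,2]$, for instance the convex hull of $0$, $2e_1$, and $Ne_2,\dots,Ne_{m+1}$. Alternatively, and more simply, apply \cref{lem-lawrence} to a spanning point configuration of $[0,2]$ with more than $2$ points, namely $\{0,1,2\}$. This yields infinitely many non-isomorphic Lawrence twists of dimension $1+2k$ that are not free joins. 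By \cref{lawrence_twist} each of them has Hodge vector $(1)$.

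\emph{Main obstacle.} The construction itself is immediate; the real work is certifying non-isomorphism and that the examples are not free joins. If we rely on \cref{lem-lawrence}, this is already done there, so the corollary follows at once. For a self-contained argument with the explicit family I would distinguish members by normalized volume, which grows with $N$ because $P_2$ and $\tilde P$ grow. To rule out free joins, I would use that a free join $A\circ B$ has $h^*$-polynomial $h^*_Ah^*_B$ and $\ell^*$-polynomial $\ell^*_A\ell^*_B$. Since the Hodge vector is $(1)$, both factors would have to be nearly thin, or one factor would be a lattice pyramid, which is impossible because pyramids have $\ell^*=0$. I would then check the vertex and facet structure of the explicit Cayley polytope against these possibilities, in the spirit of the remark on double points following \cref{lem-lawrence}.
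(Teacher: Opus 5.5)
Your proposal is correct: applying \cref{lem-lawrence} to the spanning nearly thin segment $[0,2]$ (via $A=\{0,1,2\}$) with $k=(d-1)/2$, and then using \cref{lawrence_twist} to get Hodge vector $(1)$, is exactly the ``immediate'' argument the paper intends. Your explicit Cayley families are valid nearly thin examples, distinguished by normalized volume, but for $d\ge 5$ you do not actually rule out free joins for them, so the proof rests on the appeal to \cref{lem-lawrence}.
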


    Note also that as the Lawrence twists  construction produces Cayley polytopes, any possible Hodge vector can be given by a lattice polytope of lattice width $1$. 

\subsection{Final questions}

Summing up our discussion one may ask the following:

\begin{question}\
    \begin{enumerate}
        \item Apart from free joins with nearly thin polytopes and generalized Lawrence twists (with mixed volume $V=1$ families) are there other possibilities to get an \emp{infinite} family of high-dimensional lattice polytopes with the same Hodge vector?
        \item Apart from being trivially thin, a free join with a thin polytope or arising from a generalized Lawrence twist are there only \emp{finitely} many thin polytopes in given dimension?
    \end{enumerate}
\end{question}

The reader is invited to check that in dimension $2$ every thin polytope is a generalized Lawrence twist except for $2 \Delta_2$. 

Finally, let us remark that in the thesis \cite{vadym_thesis} of the first author another construction, called \emp{total twist}, was presented motivated by \cite{GGFernando} that conjecturally preserves the Hodge vector. However, in contrast to generalized Lawrence twists that construction is \emp{sporadic}, in the sense that it is not always applicable and does not allow a larger degree of freedom.

\bibliographystyle{alpha}
\bibliography{references}

\end{document}